\newcommand{\mc}[1]{\mathcal{#1}}
\newcommand{\mco}{\mathcal{O}}
\newcommand{\mcp}{\mathcal{P}}
\newcommand{\mck}{\mathcal{K}}
\newcommand{\mcb}{\mathcal{B}}
\newcommand{\mcc}{\mathcal{C}}
\newcommand{\mcn}{\mathcal{N}}
\newcommand{\mcd}{\mathcal{D}}
\newtheorem{assumption}{Assumption}
\newtheorem{definition}{Definition}
\newtheorem{remark}{Remark}
\newtheorem{lemma}{Lemma}
\newtheorem{theorem}{Theorem}
\begin{document}

\title{A Discrete Adapted Hierarchical Basis Solver For Radial
Basis Function Interpolation}

\author{Julio E. Castrill\'{o}n-Cand\'{a}s
\and Jun Li  \and Victor Eijkhout}

\keywords{Radial Basis Function \and Interpolation \and Hierarchical
  Basis \and Integral equations \and Fast Summation Methods \and
  Stable Completion \and Lifting \and Generalized Least Squares \and
  Best Linear Unbiased Estimator}

\subjclass[2010]{65D05, 65D07, 65F25, 65F10, 62J05, 41A15}

\begin{abstract}
  In this paper we develop a discrete Hierarchical Basis (HB) to
  efficiently solve the Radial Basis Function (RBF) interpolation
  problem with variable polynomial degree. The HB forms an orthogonal
  set and is adapted to the kernel seed function and the placement of
  the interpolation nodes. Moreover, this basis is orthogonal to a set
  of polynomials up to a given degree defined on the interpolating
  nodes. We are thus able to decouple the RBF interpolation problem
  for any degree of the polynomial interpolation and solve it in two
  steps: (1) The polynomial orthogonal RBF interpolation problem is
  efficiently solved in the transformed HB basis with a GMRES
  iteration and a diagonal (or block SSOR) preconditioner. (2) The
  residual is then projected onto an orthonormal polynomial basis. We
  apply our approach on several test cases to study its effectiveness.
\end{abstract}

\maketitle

\section{Introduction}
\label{RBF:intro}

The computational cost for extracting RBF representations can be
prohibitively expensive for even a moderate amount of interpolation
nodes. For an $N$-point interpolation problem using direct methods it
requires $\mc{O}(N^{2})$ memory and $\mc{O}(N^{3})$ computational
cost. Moreover, since many of the most accurate RBFs have globally
supported and increasing kernels, this problem is often badly
conditioned and difficult to solve with iterative methods. In this
paper we develop a fast, stable and memory efficient algorithm to
solve the RBF interpolation problem based on the construction of a
discrete HB.

Development of RBF interpolation algorithms has been widely studied in
scientific computing. In general, current fast solvers are not yet
optimal.  One crucial observation of the RBF interpolation problem is
that it can be posed as a discrete form of an integral equation. This
insight allows us to extend the techniques originally introduced for
integral equations to the efficient solution of RBF interpolation
problems.

RBF interpolation has been studied for several decades. In 1977 Duchon
\cite{duchon1977} introduced one of the most well known RBFs, the
thin-plate spline. This RBF is popular in practice since it leads to
minimal energy interpolant between the interpolation nodes in 2D.  In
\cite{franke1982} Franke studied the approximation capabilities of a
large class of RBFs and concluded that the biharmonic spline and the
multiquadric give the best approximation.  Furthermore, error
estimates for RBF interpolation have been developed by Schaback et
al. \cite{Wu1993,schaback1995,schaback1999} and more recently by
Narcowich et al.  \cite{narcowich2004}.

RBFs are of much interest in the area of visualization and animation.
They have found applications to point cloud reconstructions, denoising
and repairing of meshes \cite{carr2001}. In general, they have been
used for the reconstruction of 3-D objects and deformation of these
objects \cite{Noh2005}. For these areas of
applications it is usually sufficient to consider zero and
linear-degree polynomials in the RBF problems. However, other applications,
such as Neural Networks and classification \cite{Yee2001}, boundary
and finite element methods \cite{Duan2008,Duan2006}, require
consideration of higher-degree polynomials.

More recently, the connection between RBF interpolation, {\it
  Generalized Least Squares (GLSQ)} \cite{sacks1989} and its extension
to the Best Unbiased Linear Estimator (BLUE) problem has been
established \cite{Nielsen2005,Lophaven2002a,Lophaven2002b}.  If the
covariance matrix of a GLSQ (and BLUE) problem is described by a
symmetric kernel matrix of an RBF problem among other conditions, the
two problems become equivalent.  Although GLSQ is of high interest to
the statistics community, as shown by the high number of citations of
\cite{sacks1989}, the lack of fast solvers limits its application to
small to medium size problems \cite{Lophaven2002a,Lophaven2002b}.
Moreover, many of these statistical problems involve higher than zero-
and linear-degree polynomial regression
\cite{Yalavarthy2007,Yalavarthy2007a,Romero2000,Martin2004,Martin2005,Simpson98}. By
exploiting the connection between GLSQ and RBFs, we will be able to
solve GLSQ using the fast solvers developed in the RBF and integral
equation communities.

For the BLUE Kriging estimator there is less need of higher order
polynomials. In many cases quadratic is sufficient for high accuracy
estimation.  The quadratic interpolant leads to
much better estimate than constant or linear.  In addition, in
\cite{Astrid:2007} the author uses second degree polynomial BLUE for
repairing surfaces.

Recently Gumerov et al. \cite{Gumerov2007} developed a RBF solver with
a Krylov subspace method in conjunction with a preconditioner
constructed from Cardinal functions.  We note that this approach, to
our knowledge, is the state of the art for zero-degree interpolation
in $\mathbb{R}^{3}$ with a biharmonic spline. This makes it very
useful for interpolation problems in computer graphics. On the other
hand, its application to regression problems such as GLSQ is limited.

A domain decomposition method was developed in \cite{beatson2000} by
Beatson et al. This method is a modification of the Von Neumann's
alternating algorithm, where the global solution is approximated by
iterating a series of local RBF interpolation problems. This method is
promising and has led to (coupled with multi-pole expansions) $O(N\log
(N))$ computational cost for certain interpolation problems.

Although the method is very efficient and exhibits $\mathcal{O}(N\log (N))$
computational complexity, this seems to be true for small to medium
size problems (up to 50,000 nodes in $\mathbb{R}^{3}$) with smooth
data. Beyond that range the computational cost increases quadratically
as shown in \cite{beatson2000}. Other results for non smooth data
shows that the computational complexity is more erratic
\cite{carr2003}.  Furthermore, in many cases, it is not obvious how to
pick the optimal domain decomposition scheme.

An alternative approach was developed by Beatson et
al. \cite{beatson1999}, which is based on preconditioning and coupled
with GMRES iterations \cite{saad1986}.  This approach relies on the
construction of a polynomial orthogonal basis, similar to the HB
approach in our paper.  This approach gives rise to a highly sparse
representation of the RBF interpolation matrix that can be very easily
preconditioned by means of a diagonal matrix.  The new system of
equations exhibits condition number growth of no more than
$\mathcal{O}(\log{N})$.  The downside is that this basis is not
complete. This is ameliorated by the introduction of non decaying
elements, but no guarantees on accuracy can be made.

Our approach is based on posing the RBF interpolation problem as a
discretization of an integral equation and applying preconditioning
techniques. This approach has many parallels with the work developed
by Beatson et al. \cite{beatson1999}. However, our approach was
developed from work done for fast integral equation solvers.

Most of the work in the area of fast integral equation solvers has
been restricted to the efficient computation of matrix vector products
as part of an iterative scheme.  For the Poisson kernel the much
celebrated multi-pole spherical harmonic expansions leads to a fast
summation algorithm that reduces each matrix-vector multiplication to
$O(N)$ computational steps \cite{greengard1997,beatson1997}. This
technique has been extended to a class of polyharmonic splines and
multiquadrics \cite{beatson2001,cherrie2002}. More recently L. Ying et
al. has developed multipole algorithms for a general class of kernels
\cite{ying2004}.  In contrast, the development of optimal (or good)
preconditioners for integral equations has been more limited.

A unified approach for solving integral equations efficiently was
introduced in \cite{alpert1993a,alpert1993b,beylkin1991}.  A wavelet
basis was used for sparsifying the discretized operator and only
$O(N\log _2^2 (N))$ entries of the discretization matrix are needed to
achieve optimal asymptotic convergence. The downside is that it was
limited to 1D problems.

In \cite{Castrillon2003} a class of multiwavelets based on a
generalization of Hierarchical Basis (HB) functions was introduced for
sparsifying integral equations on conformal surface meshes in
$\mathbb{R}^{3}$.  These wavelets are continuous, multi-dimensional,
multi-resolution and spatially adaptive.  These constructions are
based on the work on {\it Lifting} by Schroder and Sweldens \cite{ss1}
and lead to a class of adapted HB of arbitrary polynomial degree.  A
similar approach was also developed in \cite{tausch2003}.

These constructions provide compression capabilities that are
independent of the geometry and require only $O(N\log _{4}^{3.5} (N))$
entries to achieve optimal asymptotic convergence. This is also true
for complex geometrical features with sharp edges. Moreover, this
basis has a multi-resolution structure that is related to the BPX
scheme \cite{pasciak1990}, making them an excellent basis to
precondition integral and partial differential equations. In
\cite{heedene2005} Heedene et al.  demonstrate how to use this basis
to build scale decoupled stiffness matrices for partial differential
equations (PDEs) over non uniform irregular conformal meshes.

In this paper, we develop a discrete HB for solving isotropic RBF
interpolation problems efficiently.  Our HB construction is adapted to
the topology of the interpolating nodes and the kernel. This new basis
decouples the polynomial interpolation from the RBF part, leading to a
system of equations that are easier to solve.  With our sparse SSOR
\cite{gene1996,Kolotilina1989} or diagonal preconditioner, combined
with a fast summation method, the RBF interpolation problem can be
solved efficiently.

Our contributions include a method with asymptotic complexity costs
similar to Gumerov et al \cite{Gumerov2007} for problems in
$\mathbb{R}^{3}$.  However, their approach is restricted to only
constant degree RBF interpolation.  Due to the decoupling of the
polynomial interpolation, our approach is more flexible and works well
for higher degree polynomials. We show similar results for the
multiquadric RBFs in $\mathbb{R}^{3}$. In contrast we did not observe multiquadric
results for $\mathbb{R}^{3}$ in \cite{Gumerov2007} and to our
knowledge this result is not available. Note that the idea of decoupling the
RBF system of equations from the polynomial interpolation has also
been proposed in \cite{sibson1991} and \cite{beatson2000}.

In the rest of Section \ref{RBF:intro} we explicitly pose the RBF
interpolation problem. In Section \ref{ADHBC}, we construct an HB that
is adapted to the interpolating nodes and the kernel seed function. In
Section \ref{RBF:Mutli} we demonstrate how the adapted HB is used to
form a multi-resolution RBF matrix, which is used to solve the
interpolation problem efficiently. In section \ref{results}, we show
some numerical results of our method.  The interpolating nodes are
randomly placed, moreover the interpolating values themselves contain
random noise. We summarize our conclusions in section
\ref{conclusions}.

During the writing of this paper we became aware of the H-Matrix
approach by Hackbusch \cite{Borm2003} applied to stochastic
capacitance extraction \cite{Zhu2005} problem. In \cite{Borm2007} the
authors apply an H-matrix approach to sparsify the kernel matrix
arising from a Gaussian process regression problem to $\mc{O}(N
\log{N})$. In our paper, we apply HB to precondition the RBF system,
although we could also use them to sparsify it. Instead, we use a fast
summation approach to compute the matrix-vector products.

\subsection{Radial Basis Function Interpolation}
\label{RBFI}

In this section we pose the problem of RBF interpolation for bounded
functions defined on $\mathbb{R}^{3}$. Although our exposition is only
for $\mathbb{R}^{3}$, the RBF problem and our HB approach can be
extended to any finite dimension.

Consider a function $f(\vec{x}):{\rm \mathbb{R}}^3 \to {\rm
  \mathbb{R}}$ in $L_{\infty}(\mathbb{R}^{3})$ and its evaluation on a
set of user-specified sampling of distinct nodes $X:=\{\vec{x}_1
,...,\vec{x}_N \}\subset {\rm \mathbb{R}}^3$, where $\vec{x} =
[x_{1},x_{2},x_{3}]^{H}$, unisolvent with respect to all polynomials
of degree at most $m$. We are interested in constructing
approximations to $f(\vec{x})$ of the form
\[
s(\vec{x})= \sum_{i=1}^{M(m)} c[i]q_{i}(\vec{x})+\sum\limits_{j=1}^N
{u[j] } K (\vec x,\vec{x}_j ),
\]
\noindent
where $K:\mathbb{R}^{3} \times \mathbb{R}^{3} \rightarrow \mathbb{R}$,
$u \in \mathbb{R}^{N}, c \in \mathbb{R}^{M(m)}$ and
$P:=\{q_{1}(\vec{x}), \dots, q_{M(m)}(\vec{x}) \}$ is a basis for
$\mcp^{m}(\mathbb{R}^{3})$, i.e. the set of all polynomials of total
degree at most $m$ in $\mathbb{R}^{3}$ (Note that $M(m)$ is the number
of polynomials that form a basis for $\mcp^{m}(\mathbb{R}^{3})$ i.e.
$M(m)=\left(m+3 \atop 3\right)$ ).  This interpolant must satisfy the
following condition
\[
s(\vec{x}_{j} )=f(\vec{x}_{j}), \quad j = 1, \dots, N,
\]
\noindent for all $\vec{x}_{j}$ in $X$. Moreover, to ensure the
interpolation is unique we add the following constraint
\begin{equation}
\label{eq2}
\sum\limits_{j=1}^N {u[j] q\left( {\vec{x}_j } \right)=0},
\end{equation}
\noindent for all polynomials $q(\vec{x})$ of degree at most $m$. Now,
since $M(m)$ is the minimum amount of nodes needed to solve the
polynomial problem, we need at least $N \geq M(m)$ RBF centers.  The
interpolation problem can be rewritten in matrix format as
\begin{eqnarray}
\left( {{\begin{array}{*{20}c}
 K \hfill & Q \hfill \\
 {Q^H} \hfill & O \hfill \\
\end{array} }} \right)\left( {{\begin{array}{*{20}c}
 u \hfill \\
 c \hfill \\
\end{array} }} \right)=\left( {{\begin{array}{*{20}c}
 d \hfill \\
 0 \hfill \\
\end{array} }} \right),
\label{RBFI:problem}
\end{eqnarray}
\noindent where $K_{i,j} =K (\vec{x}_i ,\vec{x}_j )$ with $i = 1 \dots
N$ and $j = 1 \dots N$; $d \in \mathbb{R}^{N}$ such that
$d_{j}=f(\vec{x}_{j})$; $c \in \mathbb{R}^{M(m)}$; and $Q_{i,j} =q_j
(\vec{x}_i )$ with $i = 1 \dots N$, \, $j = 1 \dots M(m)$.  Denote the
columns of Q as $[q_{1}, \dots, q_{M(m)}]$.  This is the general form
of the RBF interpolation isotropic problem. The properties of this
approximation mostly depend on the seed function $K(\vec x,\vec y)$.
An example of a well known isotropic kernel in $\mathbb{R}^{3}$ is the
biharmonic spline
\begin{equation}
\label{eq4}
K (\vec x,\vec{x}_j ):= K (| \vec x-\vec{x}_j | )
= \left | {\vec
    x-\vec{x}_j } \right |.
\end{equation}
\noindent This is a popular kernel due to the optimal smoothness of
the interpolant \cite{beatson2000}. This kernel has been successfully
applied in point cloud reconstructions, denoising and repairing of
meshes \cite{carr2001}.  More recently, there has been interest in
extensions to anisotropic kernels \cite{Casciola2006,Casciola2007},
i.e.
\[
K (\vec x,\vec{x}_j ):=K \left( {\left| {T_{j}(\vec x-\vec{x}_j )}
    \right| } \right),
\]
\noindent where $T_{j}$ is a $3 \times 3$ matrix.  The stabilization
method introduced in this paper can be extended to solving efficiently
the RBF problem with spatially varying kernels.  By using the
sparsification properties of the adapted HB a sparse representation of
the spatially varying RBF matrix can be constructed in optimal time.
However, in this paper we restrict the analysis to isotropic kernels
in $\mathbb{R}^{3}$, i.e.  $T_{j}=\alpha I$ where $\alpha > 0$.

One aspect of RBF interpolation is the invertibility of the matrix in
Equation (\ref{RBFI:problem}). In \cite{micchelli1986} it is shown
that the interpolation problem (\ref{RBFI:problem}) has a unique
solution if we assume that the interpolating nodes in $X$ are
unisolvent with respect to $\mcp^{m}(\mathbb{R}^{3})$ and the
continuous kernel is {\it strictly conditionally positive (or
  negative) definite}.  Before we give the definition, we provide some
notation.
\begin{definition} Suppose that $X \subset \mathbb{R}^{3}$ is a set of
  interpolating nodes and $\{q_{1}(\vec{x}),$ $q_{2}(\vec{x}),$
  $\dots, q_{M(m)}(\vec{x})\}$ is a basis for
  $\mcp^{m}(\mathbb{R}^{3})$, then we use $\mcp^{m}(X)$ to denote the
  column space of $Q$.
%\[
%\mcp^{m}(X) := span
%\left\{
%\left[
%\begin{array}{c}
%q_{1}(\vec{x}_{1}) \\
%\vdots \\
%q_{1}(\vec{x}_{N})
%\end{array}
%\right],
%\left[
%\begin{array}{c}
%q_{2}(\vec{x}_{1}) \\
%\vdots \\
%q_{2}(\vec{x}_{N})
%\end{array}
%\right],
%\dots,
%\left[
%\begin{array}{c}
%q_{m}(\vec{x}_{1}) \\
%\vdots \\
%q_{m}(\vec{x}_{N})
%\end{array}
%\right]
%\right\},
%\]
\end{definition}
\noindent We now assume the kernel matrix $K$ satisfies the following
assumption.
\begin{definition} We say that the symmetric function
$K(\cdot, \cdot) : \mathbb{R}^{N} \times \mathbb{R}^{N} \rightarrow
  \mathbb{R}$ is {\it strictly conditionally positive definite} of
  degree $l$ if for all sets $X \subset \mathbb{R}^{3}$ of distinct
  nodes
\[
v^{H} K v = \sum_{i,j=1}^{N}v_{i}v_{j}K(\vec{x}_{i}, \vec{x}_{j})> 0,
\]
\noindent for all $v \in \mathbb{R}^{N}$ such that $v \perp
\mcp^{l}(X)$ and $v \neq 0$. Alternatively, under the same
assumptions, $K(\cdot, \cdot) : \mathbb{R}^{N} \times \mathbb{R}^{N}
\rightarrow \mathbb{R}$ is {\it strictly conditionally negative
  definite} if
\[
v^{H} K v < 0,
\]
\noindent  for all $v \in \mathbb{R}^{N}$ such that $v \perp
\mcp^{l}(X)$.
\label{errorbounds:asmp1}
\end{definition}
%\begin{remark}
%  Many Kernels, including biharmonics and multiquadrics, satisfy the
%  strictly conditionally positive or negative assumption.  By a
%  theorem due to Micchelli \cite{micchelli1986}, this assumption is
%  satisfied for all continuous kernels with completely monotonic
%  derivative on $(0,\infty)$.
%\end{remark}
The invertibility of the RBF interpolation problem can be proven by
the basis construction developed in this paper. Although this is not
necessary, it does cast insights on how to construct a basis that can
solve the RBF Problem (\ref{RBFI:problem}) efficiently.
\subsection{Decoupling of the RBF interpolation problem}
\label{Decouple}
Suppose there exists a matrix $T:\mathbb{R}^{N-M} \rightarrow
\mathbb{R}^{N}$, where $M : = dim(\mcp^{m}(X) )$, such that
$T^{H}$ annihilates any vector $v \in \mcp^{m}(X)$ (i.e. $T^{H}v =
0 \,\,\,\,\,\, \forall v \in \mcp^{m}(X)$). Furthermore, suppose
there exists a second matrix $L:\mathbb{R}^{M} \rightarrow
\mathbb{R}^{N}$ such that the combined matrix $P := [L \,\,\,\, T]$ is
orthonormal such that $P^{H}:\mathbb{R}^{N} \rightarrow
\mathbb{R}^{N}$ maps $\mathbb{R}^{N}$ onto
\[
\mcp^{m}(X) \oplus W,
\]
\noindent where $W :=(\mcp^{m}(X))^{\perp}$.  Suppose that $u \in
\mcp^{m}(X)^{\perp}$, then $u = Tw$ for some $w \in
\mathbb{R}^{N-M}$. Problem (\ref{RBFI:problem}) can now be re-written
as
\[
T^{H}KTw + T^{H}Qc = T^{H}d.
\]
\noindent However, since the columns of $Q$ belong in $\mcp^{m}(X)$ then
\begin{equation}
T^{H}KTw = T^{H}d.
\label{RBF:eqn1}
\end{equation}
\noindent From Definition \ref{errorbounds:asmp1} and the
orthonormality of $P$ we conclude that $w$ can be solved uniquely.
The second step is to solve the equation $L^{H}Qc = L^{H}d -
L^{H}KTw$. From the unisolvent property of the nodes X the matrix $Q$
has rank $dim( \mcp^{m}(\mathbb{R}^{3}))$, moreover, $L$ also has
rank $dim( \mcp^{m}(\mathbb{R}^{3}))$, thus $L^{H}Q$ has full rank
and it is invertible.

% Since the full RBF interpolation problem has at least one solution
% and X is unisolvent with respect to $\mcp^{m}(\mathbb{R}^{3}))$
% we conclude that $c$ can be solved uniquely.

Although proving the existence of $P$ and hence the uniqueness of the
RBF problem is an interesting exercise, there are more practical
implications to the construction of $P$. First, the coupling of $Q$
and $K$ can lead to a system of ill-conditioned equations depending on
the scale of the domain \cite{beatson2000}. The decoupling property of
the transform $P$ leads to a scale independent problem, thus
correcting this source of ill-conditioning. But more importantly, we
focus on the structure of $T^{H}KT$ and how to exploit it to solve the
RBF interpolation problem (\ref{RBFI:problem}) efficiently.  The key
idea is the ability of $T^{H}$ to vanish discrete polynomial moments
and its effect on the matrix $K(\cdot,\cdot)$. We shall now restrict
our attention to Kernels that satisfy the following assumption.
\begin{assumption} Let $D^\alpha_{x} := \frac{\partial^{\alpha_{1},
\alpha_{2}, \alpha_{3}}}
{\partial \vec{x}_{1}^{\alpha_{1}} \partial
\vec{x}_{2}^{\alpha_{2}} \partial \vec{x}_{3}^{\alpha_{3}}}$ and
similarly for $D^{\beta}_{y}$, we assume that
\[
D^{\alpha}_{x}D^{\beta}_{y} K(\vec x,\vec y) \leq \frac{C}{
  |\vec{x}-\vec{y}|^{q + |\alpha| + |\beta|} },
\]
\noindent where $\alpha= (\alpha_{1},\alpha_{2},\alpha_{3}) \in
\mathbb{Z}{^3}$, $|\alpha| = \alpha_{1}+\alpha_{2}+\alpha_{3}$, and
$q\in \mathbb{Z} $. In addition, we assume that $K(\vec x, \cdot)$ and
$K( \cdot,\vec y)$ are analytic everywhere except for $\vec x = \vec
y$.
\label{errorestimates:assumption2}
\end{assumption}
This assumption is satisfied by many practical kernels, such as
multiquadrics and polyharmonic splines \cite{franke1982,beatson2000}.
\section{Adapted Discrete Hierarchical Basis Constructions}
\label{ADHBC}
In this section we show how to construct a class of discrete HB that
is adapted to the kernel function $K(\cdot,\cdot)$ and to the local
interpolating nodes (or interpolating nodes) contained in $X$. The
objective is to solve RBF interpolation Problem (\ref{RBFI:problem})
efficiently. The HB method will be divided into the following parts:
\begin{itemize}
\item {\bf Multi-resolution domain decomposition.}  The first part is
in essence a preprocessing step to build cubes at different levels
of resolution as place holders for the interpolation nodes belonging
to $X$.
\item {\bf Adapted discrete HB construction.} From the
  multi-resolution domain decomposition of the interpolating nodes in
  $X$, an adapted multi-resolution basis is constructed that
  annihilates any polynomial in $\mcp^{p}(X)$, where $p \in
  \mathbb{Z}^+$ and $p \geq m$. $p$ will be in essence the degree of
  the Hierarchical Basis, which is not to be confused with $m$.
  \item {\bf GMRES iterations with fast summation method.} With the
  adapted HB a multi-resolution RBF interpolation matrix is implicitly
  obtained through a fast summation method and solved iteratively with
  a GMRES algorithm and an SSOR or diagonal preconditioner.
\end{itemize}
\subsection{Multi-resolution Domain Decomposition}
\label{subsection:MRDD}

Without loss of generality, it is assumed that the interpolating nodes
in $X$ are contained in a cube $B^{0}_{0} := [0,1]^{3}$. The next
step is to form a series of level dependent cubes that serve as place
holders for the interpolating nodes at each level of resolution.

The basic algorithm is to subdivide the cube $B^{0}_{0}$ into eight
cubes if $|B^{0}_{0}| > M(p)$, where $|B^{j}_k|$ denotes the total
number of interpolating nodes contained in the cube $B_j^k$.
Subsequently, each cube $B^{j}_{k}$ is sub-divided if $|B^{j}_{k}| >
M(p)$ until there are at most $M(p)$ interpolating nodes at the finest
level.  The algorithm is explained more in detail in the following
pseudo-code:

\begin{algorithm}[H]

  \KwIn{$X:=\{ \vec{x}_{1}, \vec{x}_{2},\dots, \vec{x}_{N}$ \},
    $M(p)$} \KwOut{$B^{j}_{k}\,\,\, \forall k \in \{
    \mck(0),\dots,\mck(n) \}, n$}

\Begin{

pre-processing\;

% $q \leftarrow {\bf max}_{\vec{x}_{i}, \vec{x}_{j} \in X } \|
% \vec{x}_i-\vec{x}_j\| $\;

% \ForAll{$\vec{x}_{i} \in X$}{$\vec{c} \leftarrow
%   \frac{\sum_{i=1}^{N} \vec{x}_{i}}{N}$\;

% }

% \For{$i \leftarrow 1$ \KwTo N}{
%          $\vec{x}_{i} \leftarrow \frac{\vec{x}_i-\vec{c}}{q}$\;
%         }

$j \leftarrow 0$;
$B^{0}_{0} \leftarrow [0,1]^{3}$;
$\mck(0) \leftarrow \{ 0 \}$\;

main\;

\While{$|B^{j}_{k}| > M(p)$ {\bf for any} $k \in \mck(j) $} {

$\mck(j + 1) \leftarrow \emptyset$ \;

\For{$k \leftarrow 0$ \KwTo $|\mck(j)|$} {

% \If {$|B^{j}_{k}| > M(p)$}{subdivide into eight cubes
  forming $B^{j+1}_{8k},\dots,B^{j+1}_{8k+7}$;
$\mck(j+1) \leftarrow \mck(j+1) \cup_{w=0}^{7} 8k+w$\;

% }

}

$j \leftarrow j + 1$ \;

}

$n \leftarrow j\;$
}

\caption{Multi-resolution Domain Decomposition}
\label{MHBC:algorithm1}
\end{algorithm}
\begin{remark}
$\mck(j)$ is an index set for all the cubes at level $j$.
We use $| \mck(j) |$ to denote the cardinality of $\mck(j)$.
\end{remark}
\begin{remark} Finding the distance between any two boxes can
  be performed in $\mco(N(n+1))$ computational steps by applying
  an octree algorithm. Therefore the Multi-resolution Domain
  Decomposition algorithm can be performed in $\mco(N(n+1))$
  computational steps. This can be easily seen since the maximum
  number of boxes at any level $j$ is bounded by $N$ and there is a
  total of $n+1$ levels.
\label{MHBC:remark2}
\end{remark}
Before describing the construction of the adapted discrete HB, we
introduce some more notations to facilitate our discussion.
\begin{definition}
  Let $\mcb_{j}$ be the set of all the cubes $B^{j}_{k}$ at level
  $j$ that contain at least one interpolating center from $X$.
\end{definition}
\begin{definition} Let ${\bf C} := \{ e_{1},\dots,e_{N} \},$ where
  $e_{i}[i]=1$ and $e_{i}[j]=0$ if $i \ne j$.  Furthermore, define the
  bijective mapping $F_{p}: {\bf C} \rightarrow X$ such that $F_{p}
  (e_{i}) = \vec{x}_{i}$, for $i = 1 \dots N$ and $F_{q}: {\bf C}
  \rightarrow Z^{+}$ s.t. $F_{q}(e_{i}) = i$.  Now, for each cube
  $B^{n}_{k} \in \mcb_n$ at the finest level $n$, let
  \[ {\bf B}^{n}_{k} := \{ e_{i}\,\,\,|\,\,\ F_{p} (e_{i}) \in
  B^{n}_{k}\}.
  \]
  \noindent and for all $l = 1, \dots, n-1$

  \[ {\bf \tilde{B}}^{l}_{k} := \{ e_{i}\,\,\,|\,\,\ F_{p} (e_{i}) \in
  B^{l}_{k}\}.
  \]

\end{definition}
\begin{definition}
  Let $\mcc_{n} := \bigcup_{k \in \mck(n)} {\bf B}^{n}_{k}$.
\end{definition}
\begin{definition} For all $j = 0, \dots, n-1$, let
  $children(B^{j}_{k})$ be the collection of nonempty subdivided cubes
  $B^{j+1}_{l} \in \mcb_{j+1}$, for some $l \in \mathbb{N}$, of
  the cube $B^{j}_{k}$.
\end{definition}

\begin{definition} For every non empty $B^{j}_{k}$ let the set
  $parent(B^{j}_{k}) := \{B^{j-1}_{l} \in \mcb_{j-1} \,\,\,|\,\,\,
  B^{j}_{k} \in children(B^{j-1}_{l}) \}$.
\end{definition}
\subsection{Basis Construction}
\label{sub:basisconstruction}

From the output of the multi-resolution decomposition Algorithm
\ref{MHBC:algorithm1} we can now build an adapted discrete HB that
annihilates any polynomial in $\mcp^{p}(X)$. To construct such a
basis, we apply the {\it stable completion} \cite{carnicer1996}
procedure. This approach was followed in
\cite{Castrillon2003}. However, the basis is further orthogonalized by
using a modified Singular Value Decomposition (SVD) orthonormalization
approach introduced in \cite{tausch2003}.

Suppose $v_{1},\dots,v_{s}$ are a set of orthonormal vectors in
$\mathbb{R}^{N}$, where $s \in \mathbb{Z}^{+}$, a new basis is
constructed such that
\[
\phi_{j} := \sum_{i=1}^{s} c_{i,j} v_{i}, \hspace{2mm} j=1, \dots, a;
\hspace{5mm} \psi_{j} := \sum_{i=1}^{s} d_{i,j} v_{i}, \hspace{2mm}
j=a+1, \dots, s,
\]
\noindent where $c_{i,j}$, $d_{i,j} \in \mathbb{R}$ and for some $a
\in \mathbb{Z}^{+}$. We desire that the new discrete HB vector
$\psi_{j}$ to be orthogonal to $\mcp^{p}(X)$, i.e.
\begin{equation}
\sum_{k=1}^{N} r[k] \psi_{j}[k] = 0,
\label{hbconstruction:eqn1}
\end{equation}
\noindent for all $r \in \mcp^{p}(X)$. Notice that the summation
and the vectors $r$ and $\psi_{j}$ are in the same order as the entries
of the set X.

Due to the orthonormality of the basis $\{v_{i}\}_{i=1}^{s}$ this
implies that Equation (\ref{hbconstruction:eqn1}) is satisfied if the
vector $[ d_{i,1} , \dots, d_{i,s} ]$ belongs to the null space of
the matrix
\[
M_{s,p} := Q^{H}V,
\]
\noindent where the columns of $Q$ are a basis for $\mcp^{p}(X)$
(i.e. all the polynomial moments) and $V = [ v_{1}, v_{2}, \dots,
v_{s} ]$.  (Notice that the order of the summation is done with
respect to the set X). Suppose that the matrix $M_{s,p}$ is a rank $a$
matrix and let $U_{s,p}D_{s,p}V_{s,p}$ be the SVD decomposition. We
then pick
\begin{equation}
  \left[ \begin{array}{lll|lll}
      c_{0,1} & \dots &c_{a,1} & d_{a+1,1} & \dots &d_{s,1} \\
      c_{0,2} & \dots &c_{a,2} & d_{a+1,2} & \dots &d_{s,2} \\
      \vdots & \vdots & \vdots & \vdots & \vdots & \vdots   \\
      c_{0,s} & \dots &c_{a,s} & d_{a+1,s} & \dots &d_{s,s}
    \end{array}
\right] := V^{H}_{s,p},
\label{eqDefVspT}
\end{equation}
\noindent where the columns $a+1$, \dots, $s$ form an orthonormal
basis of the nullspace $\mcn(M_{s,p})$. Similarly, the columns
$1,\dots, a$ form an orthonormal basis of $\mathbb{R}^{N} \backslash
\mcn(M_{s,p})$.
\begin{remark} If $\{v_{1},\dots,v_{s}\}$ is orthonormal, then new
  basis $\{\phi_{1},\dots,\phi_{a}, $ $\psi_{a+1},\dots,$ $\psi_{s}
  \}$ is orthonormal, and spans the same space as $span
  \{v_{1},\dots,v_{s}\}$.  This is due to the orthonormality of the
  matrix $V_{s,p}$.
\end{remark}
\begin{remark} If $s$ is larger than the total number
of vanishing moments, then $M_{s,p}$ is guaranteed to have a nullspace
of at least rank $s - M(p)$, i.e. there exist at least $s - M(p) $
orthonormal vectors $\{ \psi_{i} \}$ that satisfy Equation
\ref{hbconstruction:eqn1}.
\end{remark}
\subsection{Finest Level}
We can now build an orthonormal multi-resolution basis.  First, choose
a priori the degree of moments $p$ and start at the finest level
$n$. The next step is to progressively build the adapted HB as the
levels are traversed.

At the finest level $n$, for each cube ${\bf B}^{n}_{k} \in \mcc_{n}$
let $v_{i} := e_{i}$ for all $e_{i} \in {\bf B}^{n}_{k}$.  As
described in the previous section, the objective is to build new
functions
\[
\phi_{k,l}^{n} := \sum_{i=1}^{s} c_{n,i,l,k} v_{i}, \hspace{2mm} l=1,
\dots, a_{n,k},
\hspace{5mm} \psi^{n}_{k,l} := \sum_{i=1}^{s} d_{n,i,l,k} v_{i},
\hspace{2mm} l=a_{n,k}+1, \dots, s,
\]
\noindent such that Equation (\ref{hbconstruction:eqn1}) is satisfied.

The first step is to form the matrix $M^{n,k}_{s,p}:=Q^{H}V$, where
the columns of $Q$ are a basis for $\mcp^{p}(X)$. Notice that
since $e_{i}[w] = 0 $ for $w \neq i$ and $e_{i} \in {\bf B}^{n}_{k} $,
then only $|B^{n}_{k}|$ columns of $Q^{H}$ are needed to form the
matrix $M^{n,k}_{s,p}$ and the rest can be thrown away since they
multiply with zero.

The next step is to apply the SVD procedure such that $M^{n,k}_{s,p}
\rightarrow U^{n,k}_{s,p}D^{n,k}_{s,p}V^{n,k}_{s,p}$. The coefficients
$c_{n,i,j,k}$ and $d_{n,i,j,k}$ are then obtained from the rows of
$V^{n,k}_{s,p}$ and $a_{n,k} := {\it rank} M_{s,p}$.

Now, for each ${ B}^{n}_{k} \in \mcb^{n}$ denote $\bar{C}^{n}_{k}$
as the collection of basis vectors $\{ \phi^{n}_{k,1}, \dots,$
$\phi^{n}_{k, a_{n,k}} \}$, and similarly denote $\bar{D}^{n}_{k}$ as
the collection of basis vectors $\{ \psi^{n}_{k,a_{n,k}+1},\dots, $ $
\psi^{n}_{k,s}\}$.  Furthermore, we define the {\it detail} subspace
\[
W^{n}_{k} := span \{ \psi^{n}_{k,a_{n,k}+1}, \dots, \psi^{n}_{k,s
} \}
\]
\noindent and the {\it average} subspace
\[
V^{n}_{k} := span \{ \phi^{n}_{k,1}, \dots, \phi^{n}_{k, a_{n,k}} \}.
\]
\noindent By collecting the transformed
vectors from all the cubes in $\mcc_{n}$, we form the subspaces
\[
V^{n} := \oplus_{k \in \mck(n)} V^{n,k}, \,\,\,
W^{n} := \oplus_{k \in \mck(n)} W^{n,k},
\]
\noindent where $\oplus$ is a direct sum and $\mck(i) := \{k
\,\,\,|\,\,\, B^{i}_{k} \in \mcb^{i} \}$.
\begin{remark}
  We first observe that $\mathbb{R}^{N} = V^{n} \oplus W^{n} \oplus
  \tilde{V}_{n}$, where $\tilde{V}_{n}$ is the span of all the unit
  vectors contained in $\{ \{ \tilde{\bf B}^{l}_{k} \}_{k \in \mck(l)}
  \}_{l=1}^{n-1}$.  This is true since the number of interpolating
  nodes is equal to $N$ and $\mathbb{R}^{N} = span \{e_{1}, e_{2},
  \dots, e_{N} \}$.
\label{hbconstruction:remark5}
\end{remark}
\begin{remark}
  It is possible that $W^{n}_{k} = \emptyset$ for some particular cube
  ${B}^{n}_{k}$. This will be the case if the cardinality of ${\bf
    B}^{n}_{k}$ is less or equal to $M(p)$ i.e. the dimension of the
  nullspace of $M_{s,p}$ is zero. However, this will not be a problem.
  As we shall see in section \ref{section:intermediate}, the next set
  of HB are built from the vectors in $\bar{C}^{n}_{k}$ and its
  siblings.
\end{remark}
\begin{lemma}
The basis vectors of $V^{n}$ and $W^{n}$ form an orthonormal set.
\label{hbconstruction:lemma7}
\end{lemma}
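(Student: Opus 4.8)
The plan is to split the claim into within-cube orthonormality and cross-cube orthogonality, the latter resting on the fact that the finest-level cubes induce disjoint supports.

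First I would fix a single nonempty cube $B^{n}_{k} \in {\cal B}_{n}$ and recall that its generating vectors are $v_{i} = e_{i}$ for $e_{i} \in {\bf B}^{n}_{k}$; these are distinct standard basis vectors and hence form an orthonormal subset of $\mathbb{R}^{N}$. The transformed vectors $\phi^{n}_{k,l}$ and $\psi^{n}_{k,l}$ are produced by applying the rows of the orthonormal factor $V^{n,k}_{s,p}$ of the SVD to this orthonormal set, so the Remark established in Section~\ref{sub:basisconstruction} applies verbatim: the family $\{\phi^{n}_{k,1},\dots,\phi^{n}_{k,a_{n,k}},\psi^{n}_{k,a_{n,k}+1},\dots,\psi^{n}_{k,s}\}$ is orthonormal and spans $span\{e_{i}\,:\,e_{i}\in{\bf B}^{n}_{k}\}$. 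This disposes of every pair of basis vectors coming from the same cube.

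Second I would consider two basis vectors originating from distinct cubes $B^{n}_{k}$ and $B^{n}_{k'}$ with $k\neq k'$. Each $\phi^{n}_{k,l}$ and each $\psi^{n}_{k,l}$ is by construction a linear combination only of the $e_{i}$ with $e_{i}\in{\bf B}^{n}_{k}$, so, viewed in $\mathbb{R}^{N}$, its support is contained in the index set $\{F_{q}(e_{i})\,:\,e_{i}\in{\bf B}^{n}_{k}\}$. Since Algorithm~\ref{MHBC:algorithm1} places every node of $X$ into exactly one finest-level cube, the sets ${\bf B}^{n}_{k}$ are pairwise disjoint, and therefore so are their index sets. Because the standard basis is orthonormal and no coordinate index is shared between the two supports, the inner product of any vector supported on the indices of $B^{n}_{k}$ with any vector supported on the indices of $B^{n}_{k'}$ vanishes. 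This yields orthogonality for every cross-cube pairing, whether of two $\phi$'s, two $\psi$'s, or a $\phi$ with a $\psi$.

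Combining the two cases, every pair of distinct vectors in $\bigcup_{k\in{\cal K}(n)}\bar{C}^{n}_{k}\,\cup\,\bigcup_{k\in{\cal K}(n)}\bar{D}^{n}_{k}$ is orthogonal and each such vector has unit norm; since this family is exactly a basis for $V^{n}\oplus W^{n}$, it is orthonormal, which proves the lemma. I expect the only step requiring care to be the partition argument: one must confirm that the finest-level cubes genuinely partition the node set so that the disjoint-support conclusion is legitimate. This is guaranteed by the subdivision rule of Algorithm~\ref{MHBC:algorithm1} and is precisely the fact underpinning Remark~\ref{hbconstruction:remark5}; the within-cube step, by contrast, is immediate from the orthonormality of $V^{n,k}_{s,p}$ and requires no estimate.
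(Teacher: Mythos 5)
Your proposal is correct and follows essentially the same route as the paper's proof: cross-cube orthogonality from the disjointness of the sets ${\bf B}^{n}_{k}$ (disjoint supports in the standard basis), and within-cube orthonormality from the orthonormality of the rows of $V^{n,k}_{s,p}$. You simply spell out both halves in more detail than the paper does.
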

\begin{proof}
  First notice that since ${\bf B}^{n}_{l} \cap {\bf B}^{n}_{k} =
  \emptyset $ whenever $k \neq l$ then $V^{n,k} \perp V^{n,l}$,
  $W^{n,k} \perp W^{n,l}$ and $V^{n,k} \perp W^{n,l}$. The result
  follows from the fact that the rows $V^{n,k}_{s,p}$ form an
  orthonormal set.
\end{proof}
It is clear that the {\it detail} subspace $W^{n} \perp \mcp^{p}(\vec{x})$, but the {\it average} subspace $V^{n}$ is
not. However, we can still perform the SVD procedure to further
decompose $V^{n}$. To this end we need to accumulate the {\it average}
basis vectors of $V^{n}$ and all the unit basis vectors in $\{{\bf
  \tilde{B}}^{n-1}_{k \in \mck(n-1)} \}$.  For each $B^{n-1}_{k}
\in \mcb_{n-1}$ identify the set $children(B^{n-1}_{k})$. Form the
set ${\bf B}^{n-1}_{k} :=\{\bar{C}^{n}_{l} \,\,\,|\,\,\, B^{n}_{l} \in
children(B^{n-1}_{k}) \} $.  If $B^{n_1}_{k}$ has no children then
${\bf B}^{n-1}_{k} = \tilde{\bf B}^{n-1}_{k}$.  We can now apply the
SVD procedure on each set of average vectors in ${\bf B}^{n-1}_{k}$.
\subsection{Intermediate Level}
\label{section:intermediate}
Suppose we have the collection of sets ${\bf B}^{i}_{k}$ for all $k
\in \mck(i)$. For each ${\bf B}^{i}_{k}$ perform the matrix
decomposition $M^{i,k}_{s,p} =
U^{i,k}_{s,p}D^{i,k}_{s,p}V^{i,k}_{s,p}$ for all $v \in {\bf
  B}^{i}_{k}$.  From the matrix $V^{i,k}_{s,p}$ obtain the
decomposition
\[
\phi_{k,l}^{i} := \sum_{j=1}^{s} c_{i,j,l,k} v_{j}, \hspace{2mm} l=1,
\dots, a_{i,k}, \hspace{5mm} \psi^{i}_{k,l} := \sum_{j=1}^{s}
d_{i,j,l,k} v_{j}, \hspace{2mm} l=a_{i,k}+1, \dots, s,
\]
\noindent where the coefficients $c_{i,j,l,k}$ and $d_{i,j,l,k}$ are
obtained from the rows of $V^{i,k}_{s,p}$ in (\ref{eqDefVspT}) and
$a_{i,k} := {\it rank} M^{i,k}_{s,p}$.  Then we form the subspaces
\[
W^{i}_{k} := span \{ \psi^{i}_{k,a_{i,k}+1}, \dots, \psi^{i}_{k, s}
\}, \,\,\, V^{i}_{k} := span \{ \phi^{i}_{k,1}, \dots, \phi^{i}_{k,
  a_{i,k}} \},
\]
\noindent and
\[
V^{i} := \oplus_{k \in \mck(i)} V^{i,k}, \,\,\, W^{i} := \oplus_{k
  \in \mck(i)} W^{i,k}.
\]
\noindent It is easy to see that $V^{i+1} = V^{i} \oplus W^{i} \oplus
\tilde{V}_{i}$, , where $\tilde{V}_{i}$ is the span of all the unit
vectors contained in $\{ \{ \tilde{\bf B}^{l}_{k} \}_{k \in \mck(l)}
\}_{l=1}^{i}$. The basis vectors are collected into two groups:
\begin{definition}
  For each ${B}^{i}_{k} \in \mcb^{i}$ that have children let the
  sets, for $i = 0,\dots,n-1$, $\bar{C}^{i}_{k} := \{ \phi^{i}_{k,1},
  \dots, \phi^{i}_{k, a_{i,k}} \},$ and $\bar{D}^{i}_{k} :=$ $\{$
  $\psi^{i}_{k,a_{n,k}+1},\dots, $ $ \psi^{i}_{k,s }\}$.
\end{definition}
Just as for the finest level case, we can further decompose
$V^{i}$. To this end, for each $B^{i-1}_{k} \in \mcb_{i-1}$
identify the set $children(B^{i-1}_{k})$ and form the set ${\bf
  B}^{i-1}_{k}$ $:=\{\bar{C}^{i}_{l}$ $ \,\,\,|\,\,\, B^{i}_{l} \in$
$children(B^{i-1}_{k}) \}$.  If $B^{i-1}_{k}$ has no children then
${\bf B}^{i-1}_{k} = \tilde{\bf B}^{i-1}_{k}$.
\subsection{Coarse Level}
It is clear that when the iteration reaches $V^{0}$ the basis function
no longer annihilates polynomials of degree $p$. However, a new basis
can be obtained that can vanish polynomials of degree $m$.

Recall that for the RBF interpolation problem with polynomial degree
$m$ it is imposed that $u \perp \mcp^{m}(X)$. If $p = m$ then it
is clear that $u \in W^{0} \oplus \dots W^{n}$ and RBF problem
decouples as shown in Section \ref{RBF:intro}. However, if $p > m$
then $u \in (\mcp^{p}(X) \backslash \mcp^{m}(X)) \oplus W^{0}
\oplus \dots W^{n}$ and the RBF problem does not decouple. It is then
of interest to find an orthonormal basis to $\mcp^{p}(X)
\backslash \mcp^{m}(X).$ This can be easily achieved.  Let the
columns of the matrix $Q$
% \[
% \left[
% \left[
% \begin{array}{c}
% q_{1}(\vec{x}_{1}) \\
% \vdots \\
% q_{1}(\vec{x}_{N})
% \end{array}
% \right],
% \left[
% \begin{array}{c}
% q_{2}(\vec{x}_{1}) \\
% \vdots \\
% q_{2}(\vec{x}_{N})
% \end{array}
% \right],
% \dots,
% \left[
% \begin{array}{c}
% q_{\tilde{m}}(\vec{x}_{1}) \\
% \vdots \\
% q_{\tilde{m}}(\vec{x}_{N})
% \end{array}
% \right],
% \dots,
% \left[
% \begin{array}{c}
% q_{\tilde{p}}(\vec{x}_{1}) \\
% \vdots \\
% q_{\tilde{p}}(\vec{x}_{N})
% \end{array}
% \right]
% \right]
% \]
be a basis for $\mcp^{p}(X)$, where each function $q_{i}(x)$
corresponds to the $i^{th}$ moment. Now, the first $M(m)$ columns
correspond to a basis for $\mcp^{m}(X)$. Thus an orthonormal basis
for $\mcp^{p}(X) \backslash \mcp^{m}(X)$ is easily achieved by
applying the Gram-Schmidt process.

Alternatively the matrix $M_{0,m}$ can now be formed by applying the
SVD decomposition and a basis that annihilates all polynomial of
degree $m$ or lower is obtained. The matrix $\bar{C}_{0}^{0}$ can now
be replaced with the matrix $[C^{-1}_{0}, D^{-1}_{0} ]$, where the
columns of $C^{-1}_{0}$ form an orthonormal basis for $\mcp^{m}(X)$
and $D^{-1}_{0}$ is an orthonormal basis for $\mcp^{p}(X) \backslash
\mcp^{m}(X)$.

The complete algorithm to decompose $\mathbb{R}^{N}$ into a
multi-resolution basis with respect to the interpolating nodes $X$ is
described in Algorithms \ref{MHBC:algorithm2} and
\ref{MHBC:algorithm3}.

\begin{algorithm}[H]

      \KwIn { Finest level $n$; Degree of RBF $m$; $B_{k}^{j} \,\,\,
        \forall k \in \mck(j), j = -1\dots n$; ${\bf B}^{n}_{k}
        \forall k \in \mck(n)$;  $\{ \{ \tilde{\bf B}^{l}_{k} \}_{k \in \mck(l)} \}_{l=1}^{n}$;
        Degree of vanishing moments $p \geq m$;
         $X$.}

      \KwOut{$\{ \bar{C}^{-1}_{0}, \bar{D}^{-1}_{0}, \bar{D}^{0}_{0},
        \bar{D}^{0}_{1}, \dots, \bar{D}^{n}_{k} \}$}

main\;

\For{$j \leftarrow n$ \KwTo $1$ {\bf step} $-1$}{

% \For{$k \leftarrow 0$ \KwTo $|\mck(j)|$} { }
% \If{$j > 0$}{

\For{$k \leftarrow 1$ \KwTo $|\mck(j-1)|$}{
    {\bf B}$_k^{j-1} \leftarrow \emptyset$
}
\For{$k \leftarrow 1$ \KwTo $|\mck(j)|$}{

$\{ \bar{D}^{j}_{k}$, $\bar{C}^{j}_{k} \} \leftarrow $ PolyOrtho({\bf
    B}$^{j}_{k}$, $p$); $U \leftarrow parent(B^{j}_{k})$ \;

\ForAll{$B^{j-1}_{l} \in U$}{

       ${\bf B}^{j-1}_{l} \leftarrow {\bf B}^{j-1}_{l} \cup \bar{C}^{j}_{k}$;

}

$\mbox{\bf forall}\,\,\, {\bf \tilde{B}}^{j-1}_{k} \in \mcb_{j-1}$\, \mbox{\bf let} \,${\bf B}^{j-1}_{k} = {\bf \tilde{B}}^{j-1}_{k}$;

}

}

$\{ \bar{D}^{-1}_{0}$, $\bar{C}^{-1}_{0} \} \leftarrow $ PolyOrtho({\bf
    B}$^{0}_{0}$,$m$)\;

  \caption{Adapted Discrete HB Construction}
\label{MHBC:algorithm2}
\end{algorithm}
\begin{lemma}
\[
\mathbb{R}^{N} = V^{0} \oplus W^{0} \oplus \dots W^{n} = span \{
\bar{C}^{-1}_{0}, \bar{D}^{-1}_{0}, \bar{D}^{0}_{0}, \bar{D}^{0}_{1},
\dots, \bar{D}^{n}_{k} \}.
\]
\noindent for $j = 0 \dots n$ and for all $k \in \mck(j)$
\label{properties:lemma9}
\end{lemma}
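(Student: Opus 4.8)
The plan is to prove both equalities by a downward induction on the resolution level, using the two scale relations already in hand. For the first equality I would start from the base case $\mathbb{R}^{N} = V^{n} \oplus W^{n}$ supplied by Remark~\ref{hbconstruction:remark5}, and then apply the intermediate-level refinement relation $V^{i+1} = V^{i} \oplus W^{i}$ repeatedly. Telescoping downward from $i=n-1$ to $i=0$ gives $\mathbb{R}^{N} = V^{n-1} \oplus W^{n-1} \oplus W^{n} = \cdots = V^{0} \oplus W^{0} \oplus \cdots \oplus W^{n}$, which is the claimed first identity once I confirm each sum is genuinely direct.

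First I would check that every sum above is not merely direct but orthogonal, so that the induction is self-consistent. The crucial structural fact is the nesting $V^{0} \subseteq V^{1} \subseteq \cdots \subseteq V^{n}$: this holds because each $V^{i}$ is spanned by orthonormal linear combinations of the average vectors $\bar{C}^{i+1}_{l}$ that span $V^{i+1}$, and the orthonormality of the matrices $V^{i,k}_{s,p}$ guarantees the transformed vectors span exactly the same space. Within a single level, Lemma~\ref{hbconstruction:lemma7} (and the identical argument at the intermediate levels) gives $V^{i} \perp W^{i}$. To propagate orthogonality across levels I would argue, for $i < j$, that $W^{i} \subseteq V^{i+1} \subseteq V^{j}$ while $W^{j} \perp V^{j}$, hence $W^{i} \perp W^{j}$; the same reasoning with $V^{0} \subseteq V^{j}$ yields $V^{0} \perp W^{j}$ for every $j$. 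This establishes the full orthogonal direct-sum decomposition.

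For the second equality I would identify a spanning set of each summand. By construction $W^{j} = \oplus_{k \in {\cal K}(j)} W^{j}_{k}$ is spanned by the detail collections $\bar{D}^{j}_{k}$ over $k \in {\cal K}(j)$, for each $j = 0, \dots, n$. The only summand requiring special care is $V^{0}$, since at the coarsest level the basis no longer annihilates all moments of order $p$. Here I would invoke the Coarsest Level construction, which replaces $\bar{C}^{0}_{0}$ by $[C^{-1}_{0}, D^{-1}_{0}]$ and thus gives $V^{0} = span\{\bar{C}^{-1}_{0}, \bar{D}^{-1}_{0}\}$, where $\bar{C}^{-1}_{0}$ is an orthonormal basis for ${\cal P}^{m}(X)$ and $\bar{D}^{-1}_{0}$ an orthonormal basis for ${\cal P}^{p}(X) \backslash {\cal P}^{m}(X)$. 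Collecting these spanning families across all levels reproduces exactly the listed set $\{\bar{C}^{-1}_{0}, \bar{D}^{-1}_{0}, \bar{D}^{0}_{0}, \dots, \bar{D}^{n}_{k}\}$, which closes the argument.

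The hard part will be the careful bookkeeping of orthogonality as the construction moves from fine to coarse levels. Specifically, I would need to verify that the orthonormal SVD transformation applied at each intermediate cube ${\bf B}^{i}_{k}$ simultaneously preserves the span (so that the nesting $V^{i} \subseteq V^{i+1}$ is an equality of spaces, not merely an inclusion) and keeps the new detail vectors orthogonal to every coarser and finer detail subspace. The finest level is settled by Lemma~\ref{hbconstruction:lemma7}; for the intermediate levels the within-level orthogonality follows from the disjointness of the supports ${\bf B}^{i}_{k}$ together with the orthonormality of each $V^{i,k}_{s,p}$, and this is the step I would write out in full to make the induction airtight.
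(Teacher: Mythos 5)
Your proposal is correct and follows essentially the same route as the paper: the paper's own proof is a one-line appeal to Remark \ref{hbconstruction:remark5} together with the telescoping relation $V^{i} = V^{i-1} \oplus W^{i-1}$, which is exactly your downward induction. The additional bookkeeping you supply (orthogonality across levels via the nesting $V^{i} \subseteq V^{j}$, and the coarsest-level replacement of $\bar{C}^{0}_{0}$ by $[\bar{C}^{-1}_{0}, \bar{D}^{-1}_{0}]$) simply makes explicit what the paper leaves implicit.
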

\begin{proof} The result follows from Remark
  \ref{hbconstruction:remark5} and that $V_{i}$ is decomposed into
  $V^{i-1} \oplus W^{i-1} \oplus \tilde{V}_{i-1}$ for all $i = 1 \dots
  n$.
\end{proof}
\begin{remark} When Algorithm \ref{MHBC:algorithm2}
terminates at level $i = 0$, there will be $M(p)$ orthonormal vectors
that span $\mcp^{p}(X)$.
\end{remark}
% \begin{remark} At the finest level $n$, the number of adapted HB vector
% in each cube $B^{n}_{k}$ is bounded by $M(p)$. Since at each level
% of resolution we add at most $7M(p)$ HB vectors from the previous
% level, then the maximum number of adapted HB in each cube
% $B^{j}_{k}$, at every level of resolution $j$, is bounded by
% $8M(p)8^{n-j}$. From Remark \ref{MHBC:remark3} , the adapted HB
% vectors $\psi^{j}_{k,l}$, $j=0 \dots n$, contain at most
% $M(p)8^{n-j}$ non zero entries. \label{hbconstruction:remark2}
% \end{remark}
\begin{remark} At the finest level $n$, the number of vectors in each
  matrix $\bar{C}^{n}_{k}$ corresponding to ${\bf B}^{n}_{k}$ is
  bounded by $M(p)$. Now, for each ${\bf B}^{n-1}_{k}$ there are at
  most $8M(p)$ vectors from the children of ${\bf B}^{n-1}_{k}$. From
  the procedure for the basis construction in section
  \ref{sub:basisconstruction} for each ${\bf B}^{n-1}_{k}$ there are
  at at most $M(p)$ vectors in $\bar{C}^{n}_{k}$.  Furthermore, there
  are no more than $8M(p)$ vectors in $\bar{D}^{n}_{k}$ formed.  The
  same conclusion follows for each ${\bf B}^{i}_{k}$, for all levels
  $i = 0,\dots,n$.
\label{hbconstruction:remark2}
 \end{remark}

\begin{algorithm}[H]

      \KwIn{${\bf B}^{j}_{k}$, Degree of vanishing moment $p$}

      \KwOut{$\bar{D}^{j}_{k}$, $\bar{C}^{j}_{k}$ }

% $V \leftarrow \emptyset$\;

$\bar{C}^{j}_{k} \leftarrow \emptyset$;\,\,\,, $\bar{D}^{j}_{k} \leftarrow \emptyset$
$s \leftarrow |{\bf B}^{j}_{k}|$;\,\,
$V \leftarrow [v_{1}, \dots, v_{s}]$; \,\,
% \ForAll{$v_{i} \in {\bf B}^{j}_{k}$}{$V \leftarrow$ [$V, v_{i}$]}
%         Form the $N \times M(p)$ polynomial basis matrix $Q$\;
        $M^{j,k}_{s,p} \leftarrow Q^{H}V$\;
        $[U^{j,k}_{s,p}, D^{j,k}_{s,p},V^{j,k}_{s,p}] \leftarrow \,\,
SVD(M^{j,k}_{s,p})$; \,\, $a_{j,k} \leftarrow $ rank of $D^{j,k}_{s,p}$;

\For{$l \leftarrow 1$ \KwTo $a_{j,k}$}{

 %$\phi_{k,l}^{j} \leftarrow 0$\;

%\For{$i \leftarrow 1$ \KwTo $s$}{
       $\phi_{k,l}^{j} \leftarrow  \sum_{i=1}^{s} c_{j,i,l,k} v_{i}$; \,\,\,
%}
$\bar{C}^{j}_{k} \leftarrow
  [\bar{C}^{j}_{k},\,\,\, \phi^{j}_{k,l}]$\;

}

\For{$l \leftarrow a_{j,k}+1$ \KwTo $s$ }{
% $\psi^{j}_{k,l} \leftarrow  0$\;
%\For{$i \leftarrow 1$ \KwTo $s$}{
       $\psi^{j}_{k,l} \leftarrow  \sum_{i=1}^{s} d_{j,i,l,k} v_{i}$; \,\,\,
%}
$\bar{D}^{j}_{k} \leftarrow [\bar{D}^{j}_{k}, \psi^{j}_{k,l}]$\;
}

\caption{PolyOrtho(${\bf B}^{j}_{k}$,$p$)}
\label{MHBC:algorithm3}
\end{algorithm}
\begin{definition} For any ${\bf B}^{i}_{k}$, $k \in \mck(i)$, i =
  0, \dots n, let $|{\bf B}^{i}_{k}|$ be the number of vectors in
  ${\bf B}^{i}_{k}$.
\end{definition}
\begin{theorem}
  The complexity cost for Algorithm \ref{MHBC:algorithm2} is bounded
  by $\mco(Nn)$.
\label{complexity:theorem1}
\end{theorem}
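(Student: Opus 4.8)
The plan is to factor the cost of Algorithm \ref{MHBC:algorithm2} into two independent pieces: the per-cube cost of a single invocation of PolyOrtho (Algorithm \ref{MHBC:algorithm3}), and the total number of cubes visited over all levels, plus the bookkeeping overhead of the outer loop. If I can show that a single PolyOrtho call costs ${\cal O}(1)$ and that the number of such calls together with the overhead is ${\cal O}(N(n+1))$, the claimed bound follows by multiplication.

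First I would bound the work inside one PolyOrtho call. The crucial input is Remark \ref{hbconstruction:remark2}: at the finest level $|{\bf B}^n_k| \le M(p)$, and at every coarser level each cube receives at most $M(p)$ average vectors from each of its (at most eight) children, so $s = |{\bf B}^j_k| \le 8M(p)$ uniformly in $j$ and $k$. Since $M(p)$ depends only on the fixed vanishing-moment order $p$ and not on $N$, the matrix $M^{j,k}_{s,p} = Q^H V$ has size $M(p) \times s$ with both dimensions ${\cal O}(1)$. Consequently its SVD, the extraction of the rank $a_{j,k}$, and the assembly of the coefficient matrix $V^{j,k}_{s,p}$ are all ${\cal O}(1)$ operations. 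The only steps needing care are the formation of $M^{j,k}_{s,p}$ and of the output vectors $\phi^j_{k,l}, \psi^j_{k,l}$, which I address last.

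Next I would count the cubes. At any fixed level $j$ the nonempty cubes in ${\cal B}_j$ are pairwise disjoint and each contains at least one node of $X$, so $|{\cal K}(j)| \le N$; since the levels range over $j = 0,\dots,n$, the total number of PolyOrtho invocations is at most $N(n+1)$ --- exactly the counting already used in Remark \ref{MHBC:remark2}. The remaining operations in Algorithm \ref{MHBC:algorithm2} are of the same order: reinitialising the parent collections costs ${\cal O}(N)$ per level, and because the octree is a tree each cube has a single parent, so appending $\bar{C}^j_k$ (at most $M(p) = {\cal O}(1)$ vectors) to that parent costs ${\cal O}(1)$ per cube. Multiplying the ${\cal O}(1)$ per-call cost by the ${\cal O}(N(n+1))$ calls and adding the ${\cal O}(N(n+1))$ overhead yields the bound.

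The hard part will be justifying that forming $M^{j,k}_{s,p} = Q^H V$ is genuinely ${\cal O}(1)$ rather than ${\cal O}(N)$: a literal reading of Algorithm \ref{MHBC:algorithm3} multiplies the $N \times M(p)$ matrix $Q^H$ by the $N \times s$ matrix $V$, which naively costs ${\cal O}(N)$ per cube and would inflate the total to ${\cal O}(N^2(n+1))$. The resolution I would spell out is that each input vector $v_i \in {\bf B}^j_k$ is supported only on the interpolation nodes lying in $B^j_k$, and, more usefully, that its moment vector $Q^H v_i$ is obtained from the already-computed moments of the child average vectors by the same linear combination that defines $v_i$, since $Q^H(\sum_i c_i v_i) = \sum_i c_i (Q^H v_i)$. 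Propagating these $M(p)$-dimensional moment vectors up the tree lets me assemble $M^{j,k}_{s,p}$ in ${\cal O}(M(p)\cdot s) = {\cal O}(1)$ work per cube, with the finest level seeded by $Q^H e_i$, a single column of $Q^H$. I would also note that the output basis vectors need never be expanded into full $\mathbb{R}^N$ vectors --- storing them through their local coefficient matrices $V^{j,k}_{s,p}$ suffices for the subsequent RBF solve --- which keeps the arithmetic and storage per cube at ${\cal O}(1)$ and completes the argument.
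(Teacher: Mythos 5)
Your proposal is correct, arrives at the stated bound, and rests on the same two structural facts as the paper (the uniform bound $|{\bf B}^{j}_{k}|\leq 8M(p)$ from Remark \ref{hbconstruction:remark2}, and the disjointness of the cubes at each level), but it organizes the accounting differently. The paper keeps every basis vector represented on its support: a vector in ${\bf B}^{j}_{k}$ is nonzero only on the $|B^{j}_{k}|$ interpolation nodes inside the cube, so forming $M^{j,k}_{s,p}=Q^{H}V$ costs ${\cal O}(M(p)\,|B^{j}_{k}|\,|{\bf B}^{j}_{k}|)$ per cube, and since $\sum_{k\in{\cal K}(j)}|B^{j}_{k}|=N$ at every level, each level costs ${\cal O}(N)$ and the $n+1$ levels give ${\cal O}(N(n+1))$. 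You instead push the per-cube cost down to ${\cal O}(1)$ by propagating the $M(p)$-dimensional moment vectors $Q^{H}v_{i}$ up the tree via linearity and never expanding the new basis vectors into $\mathbb{R}^{N}$, then multiply by the cube count $\sum_{j=0}^{n}|{\cal K}(j)|\leq N(n+1)$. Both routes are valid; the one caveat is that yours analyzes a leaner implementation than Algorithm \ref{MHBC:algorithm3} as literally written, which forms the full $N\times M(p)$ matrix $Q$ and accumulates $\phi^{j}_{k,l}$ and $\psi^{j}_{k,l}$ as vectors in $\mathbb{R}^{N}$ --- since the theorem is a statement about that algorithm, you should either flag the implementation change or note (as the paper's support-partition argument does) that the sparse representation already suffices. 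What your variant buys in exchange is a per-cube cost independent of $|B^{j}_{k}|$ and storage that never touches full-length vectors.
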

\begin{proof}
  Suppose we start at the finest level $n$. Now, for each box in
  $B^{n}_{k}$, the vectors $e_{i} \in {\bf B}^{n}_{k}$ have at most
  one non-zero entry. This implies that the matrix
  $M^{n,k}_{s,p}=Q^{H}V$, $Q^{H}$ is a $M(p) \times |B^{n}_{k}|$
  matrix and $V$ is at most a $|B^{n}_{k}| \times |B^{n}_{k}|$
  matrix. Then the total cost to computing $M^{n,k}_{s,p}$ for all $k
  \in \mck(n)$ is bounded by
 \[
 C\sum_{k \in \mck(n)} |B^{n}_{k}|^{2}M(p)
 \]
 for some $C > 0$.  Now since $\left| \cup_{k \in \mck(n)}
   B^{n}_{k} \right| = N$ and $|B^{n}_{k}|$ is at most $M(p)$ $\forall
 k \in \mck(n)$, then the cost for computing $\bar{C}^{n}_{k}$ and
 $\bar{D}^{n}_{k}$, $\forall k \in \mck(n)$, is at most $\mco(N)$.

 At level $n-1$, from Remark \ref{hbconstruction:remark2} we see that
 there are at most $8M(p)$ vectors in each ${\bf B}^{n-1}_{k}$
 $\forall k \in \mck(n-1)$. Forming the the matrix
 $M^{n-1,k}_{s,p}=Q^{H}V$, $Q^{H}$ is at most $M(p) \times
 |B^{n-1}_{k}|$ and $V$ is at most $|B^{n-1}_{k}| \times |{\bf
   B}^{n-1}_{k}|$.  Now, since $\left| \cup_{k \in \mck(n-1)}
 B^{n-1}_{k} \right| = N$ it follows that the cost for computing
 $M^{n-1,k}_{s,p}$, $\forall k \in \mck(n-1)$ , is at most $\mco(N)$.
 Furthermore, we have from Remark \ref{hbconstruction:remark2} that
 $|{\bf B}^{n-2}_{k}| \leq 8M(p)$, $\forall k \in \mck(n-2)$.

 Since for each level $i$, $\left| \cup_{k \in \mck(i)} B^{i}_{k}
 \right| \leq N$, then the total cost of computing $M^{i,k}_{s,p}$,
 $\forall k \in \mck(i)$, is at most $\mco(N)$ and $|{\bf
   B}^{i-1}_{k}| \leq 8M(p)$, $\forall k \in \mck(i-1)$. The
 result follows.
\end{proof}
\subsection{Properties}

%\bigskip

The adapted HB construction has some interesting properties. In
particular, the space $\mathbb{R}^{N}$ can be decomposed in a series
of nested subspaces that are orthogonal to $\mcp^{p}(X)$ and the
basis forms an orthonormal set. As a side benefit, this series of
nested subspaces can be used to prove the uniqueness of the RBF
interpolation problem.  One important property of the adapted HB is
presented in the following lemma.
\begin{lemma} The basis of $R^{N}$ described by the vectors of $\{
  \bar{C}^{-1}_{0}, \bar{D}^{-1}_{0},$ $\bar{D}^{0}_{0},$
  $\bar{D}^{0}_{1}, \dots, \bar{D}^{n}_{k} \}$, $j = 0 \dots n$, $k
  \in \mck(j)$ form an orthonormal set.
\label{properties:lemma10}
\end{lemma}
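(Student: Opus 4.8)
The plan is to prove the statement by downward induction on the level $i$, strengthening Lemma \ref{hbconstruction:lemma7} (which disposes of the finest level $n$) to every level of the construction. The invariant I would carry at level $i$ is threefold: the averages collected in $\{\bar{C}^{i}_{k}\}_{k \in {\cal K}(i)}$ form an orthonormal set spanning $V^{i}$; the details accumulated so far, $\bigcup_{j \geq i}\{\bar{D}^{j}_{k}\}$, form an orthonormal set; and $V^{i}$ is orthogonal to every $W^{j}$ with $j > i$. The base case $i = n$ is exactly Lemma \ref{hbconstruction:lemma7}, together with Remark \ref{hbconstruction:remark5} which supplies $\mathbb{R}^{N} = V^{n} \oplus W^{n}$.

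For the inductive step I would examine a single application of PolyOrtho (Algorithm \ref{MHBC:algorithm3}) at level $i$, where two orthogonality mechanisms combine. First, within a fixed cube $B^{i}_{k}$ the input vectors are the averages $\{\bar{C}^{i+1}_{l} : B^{i+1}_{l} \in children(B^{i}_{k})\}$, which are orthonormal by the inductive hypothesis; since the SVD factor $V^{i,k}_{s,p}$ is orthonormal, the produced $\phi^{i}_{k,l}$ and $\psi^{i}_{k,l}$ are again orthonormal and span exactly the input space. Second, two distinct cubes $B^{i}_{k}$ and $B^{i}_{k'}$ receive \emph{disjoint} subsets of the orthonormal family $\{\bar{C}^{i+1}_{l}\}$, because each child cube has a unique parent; as the outputs attached to $k$ are linear combinations of the inputs indexed by $k$ alone, vectors belonging to different cubes at level $i$ are mutually orthogonal. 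Hence $\{\bar{C}^{i}_{k}, \bar{D}^{i}_{k}\}_{k}$ is an orthonormal set decomposing $V^{i+1} = V^{i} \oplus W^{i}$ with $V^{i} \perp W^{i}$.

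The remaining point is that the newly created vectors are orthogonal to all details built at finer levels. Because $V^{j} \subseteq V^{j+1}$ for every $j$, the chain $V^{i} \oplus W^{i} = V^{i+1} \subseteq V^{i+2} \subseteq \dots \subseteq V^{n}$ holds, and each $V^{j} \perp W^{j}$; consequently both $V^{i}$ and $W^{i}$ sit inside every $V^{j}$ with $j \geq i+1$ and are therefore orthogonal to each such $W^{j}$. This closes the induction, so the family $\{\bar{C}^{i}_{k}\} \cup \bigcup_{j \geq i}\{\bar{D}^{j}_{k}\}$ is orthonormal. Running the induction down to the root and appending the coarsest-level split of $V^{0}$ into $\bar{C}^{-1}_{0}$ and $\bar{D}^{-1}_{0}$ (the same PolyOrtho/Gram--Schmidt step produces an orthonormal decomposition of $V^{0}$, hence one orthogonal to all $W^{j}$) yields orthonormality of the full set $\{\bar{C}^{-1}_{0}, \bar{D}^{-1}_{0}, \bar{D}^{0}_{0}, \dots, \bar{D}^{n}_{k}\}$. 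That this orthonormal set is actually a basis of $\mathbb{R}^{N}$ is the content of Lemma \ref{properties:lemma9}.

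The step I expect to be the main obstacle is the cross-level orthogonality: making precise that $\bar{D}^{i}$ is orthogonal not merely to the sibling average space at its own level but to \emph{all} previously generated details. The clean route, as above, is to phrase the entire induction in terms of the nested average spaces $V^{i} \subseteq V^{i+1}$ and the single-step splitting $V^{i} \perp W^{i}$, so that cross-level orthogonality reduces to the containment $W^{i} \subset V^{i+1}$ and never requires a direct pairwise estimate between detail vectors at non-adjacent levels.
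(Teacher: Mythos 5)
Your proof is correct and follows essentially the same route as the paper: downward induction over the levels, using the orthogonality of the SVD factor $V^{i,k}_{s,p}$ within each cube and the disjointness of the input sets ${\bf B}^{i}_{k}$ across cubes, with Lemma \ref{hbconstruction:lemma7} as the base case. The only difference is that you make explicit the cross-level orthogonality (via the nesting $W^{i} \subset V^{i+1} \subseteq \dots \subseteq V^{n}$ and $V^{j} \perp W^{j}$), a point the paper's terser argument leaves implicit.
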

\begin{proof} We prove this by a simple induction argument. Assume
  that for level $i$ the set of vectors $\{ {\bf B}^{i}_{k} \}$ are
  orthonormal. Since the rows of the set $V^{i,k}_{s,p}$ are
  orthonormal and ${\bf B}^{i}_{l} \cap {\bf B}^{i}_{k} = \emptyset $
  whenever $l \neq k$, then it follows that the vectors $ \cup_{k \in
    \mck(i)} \{\bar{C}^{i}_{k}, \bar{D}^{i}_{k} \}$ form an
  orthonormal basis.  The result then follows from Lemma
  \ref{hbconstruction:lemma7}.
\end{proof}
\begin{definition} Given a set of unisolvent interpolating nodes $X
  \subset \mathbb{R}^{3}$ with respect to $\mcp^{p}(\mathbb{R}^{3})$,
  we form the matrix $P$ from the basis vectors
  $\{\bar{C}^{-1}_{0},\bar{D}^{-1}_{0},$ $ \bar{D}^{0}_{0},
  \bar{D}^{0}_{1}, \dots,$ $\bar{D}^{n}_{k} \} $.
\end{definition}
From Lemmas \ref{properties:lemma9} and \ref{properties:lemma10} the
matrix $P$ has the following properties
\begin{enumerate}
\item If $v \in \mcp^{p}(X)$ then $P^{H}v$ has $dim(
\bar{C}^{0}_{0} )$ non-zero entries.
\item $PP^{H} = P^{H}P = I$.
\end{enumerate}
% One of the immediate implications of the existence of the HB with $p
% \geq m$ vanishing moments is that it can be used to show the
% uniqueness of the RBF interpolant.
% \begin{theorem}
 %  Suppose that the set of interpolating nodes $X$ is unisolvent with
  % respect to $\mcp^{p}(\mathbb{R}^{3})$ and the kernel matrix
  % $K(\cdot,\cdot)$ satisfies Definition \ref{errorbounds:asmp1}, then
  % the RBF Problem (\ref{RBFI:problem}) has a unique solution.
% \end{theorem}
% \begin{proof} Since we have shown the existence of a matrix $T$ that
 %  annihilates any vector $\mcp^{m}(X)$, then the result
  % follows from the argument given in section \ref{Decouple}.
  % \end{proof}
\section{Multi-Resolution RBF Representation}
\label{RBF:Mutli}
The HB we constructed above is adapted to the kernel and the location
of the interpolation nodes.  It also satisfies the vanishing moment
property.  The construction of such an HB leads to several important
consequences.  First, we can use the basis to prove the existence of a
unique solution of the RBF problem, but more importantly, this basis
can be used to solve the RBF problem efficiently.

As the reader might recall from section \ref{RBFI}, the construction
of the adapted HB decouples the polynomial interpolant from the RBF
functions if the degree of the vanishing moments $p$ is equal to the
degree of the RBF polynomial interpolant $m$. This simple result can
be extended if $p \ge m$.
\begin{theorem} Suppose $X$ is unisolvent with respect to $\mathbb{R}^{3}$ and $u$
  solves the interpolation problem of equation (\ref{RBFI:problem}) uniquely, where
  $u \perp \mcp^{m}(X)$ and the kernel satisfies Definition
  \ref{errorbounds:asmp1}. If the number of vanishing moments $p \geq
  m$ then
\begin{eqnarray}
  \left( {{\begin{array}{*{20}c}
          C_{\perp}^{H}KC_{\perp} \hfill & C_{\perp}^{H}KT \hfill \\
          {T^{H} K C_{\perp} } \hfill & T^{H}KT \hfill \\
        \end{array} }} \right)\left( {{\begin{array}{*{20}c}
          s \hfill \\
          w \hfill \\
        \end{array} }} \right)=\left( {{\begin{array}{*{20}c}
          C_{\perp}^{H} d \hfill \\
          T^{H} d \hfill \\
        \end{array} }} \right),
  \label{SRBF:eqn1}
\end{eqnarray}
\noindent for some $s \in \mathbb{R}^{M - O}$ and $w \in R^{N-M}$,
where $T: = [\bar{D}^{0}_{0}, \bar{D}^{0}_{1}, \dots,
  \bar{D}^{n}_{k}]$, $C_{\perp} = \bar{D}^{-1}_{0}$ and $O =
dim(\mcp^{m}(X))$. Moreover, $u = C_{\perp}s + Tw$.
\end{theorem}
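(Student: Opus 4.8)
The plan is to use the orthonormal change of basis $P$ built in Section \ref{ADHBC} to rewrite the unknown $u$ in HB coordinates and then project the top block of the RBF system (\ref{RBFI:problem}) onto the two relevant groups of that basis. Recall that the columns of $P$ split into three orthonormal groups: $\bar{C}^{-1}_{0}$, whose span is ${\cal P}^{m}(X)$; $C_{\perp}=\bar{D}^{-1}_{0}$, whose span is ${\cal P}^{p}(X)\backslash{\cal P}^{m}(X)$ (it has $M-O$ columns); and $T=[\bar{D}^{0}_{0},\dots,\bar{D}^{n}_{k}]$, whose span is $W=({\cal P}^{p}(X))^{\perp}$ (it has $N-M$ columns). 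Since $PP^{H}=I$ and these three spans form an orthogonal direct-sum decomposition of $\mathbb{R}^{N}$ (Lemmas \ref{properties:lemma9} and \ref{properties:lemma10}), the constraint $u\perp{\cal P}^{m}(X)$ forces the $\bar{C}^{-1}_{0}$-component of $u$ to vanish. Hence there exist $s\in\mathbb{R}^{M-O}$ and $w\in\mathbb{R}^{N-M}$ with $u=C_{\perp}s+Tw$; existence uses the assumed unique solvability of (\ref{RBFI:problem}), and $s=C_{\perp}^{H}u$, $w=T^{H}u$ are unique by orthonormality of $[C_{\perp}\ T]$.

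First I would substitute $u=C_{\perp}s+Tw$ into the first block equation $Ku+Qc=d$, obtaining $KC_{\perp}s+KTw+Qc=d$. I would then left-multiply this identity once by $C_{\perp}^{H}$ and once by $T^{H}$, producing
\[
C_{\perp}^{H}KC_{\perp}\,s+C_{\perp}^{H}KT\,w+C_{\perp}^{H}Q\,c=C_{\perp}^{H}d,
\]
\[
T^{H}KC_{\perp}\,s+T^{H}KT\,w+T^{H}Q\,c=T^{H}d,
\]
and stacking these two rows gives exactly the block system (\ref{SRBF:eqn1}), provided the two terms carrying $c$ drop out.

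The heart of the argument, and the only place where the hypothesis $p\ge m$ enters, is showing $C_{\perp}^{H}Q=0$ and $T^{H}Q=0$. The columns of $Q$ span ${\cal P}^{m}(X)$. Because $m\le p$ we have ${\cal P}^{m}(X)\subseteq{\cal P}^{p}(X)$, and therefore $W=({\cal P}^{p}(X))^{\perp}\subseteq({\cal P}^{m}(X))^{\perp}$; since every column of $T$ lies in $W$ (each $\bar{D}^{j}_{k}$ satisfies the vanishing-moment identity (\ref{hbconstruction:eqn1}) against all of ${\cal P}^{p}(X)$), it is orthogonal to ${\cal P}^{m}(X)$, so $T^{H}Q=0$. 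For $C_{\perp}=\bar{D}^{-1}_{0}$, the coarsest-level construction makes its columns an orthonormal basis of the orthogonal complement of ${\cal P}^{m}(X)$ inside ${\cal P}^{p}(X)$, hence again orthogonal to every column of $Q$, giving $C_{\perp}^{H}Q=0$. I expect this verification to be the main (though modest) obstacle, since it requires tracking that the detail vectors annihilate not merely ${\cal P}^{p}(X)$ but the smaller space ${\cal P}^{m}(X)$ carried by $Q$; the containment ${\cal P}^{m}(X)\subseteq{\cal P}^{p}(X)$ is precisely what makes this automatic.

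With the $c$-terms eliminated, the two projected equations are exactly the rows of (\ref{SRBF:eqn1}), completing the proof. As a consistency check, the second block row of the original system, $Q^{H}u=0$, is equivalent to $u\perp{\cal P}^{m}(X)$ and is thus already built into the decomposition $u=C_{\perp}s+Tw$, so no additional condition on $s$ and $w$ is imposed.
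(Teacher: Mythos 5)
Your proposal is correct and follows essentially the same route as the paper's own proof: decompose $u=C_{\perp}s+Tw$ using the HB splitting, substitute into the first block row of (\ref{RBFI:problem}), pre-multiply by $C_{\perp}^{H}$ and $T^{H}$, and use $C_{\perp},T\perp{\cal P}^{m}(X)$ to annihilate the $Qc$ terms. You simply spell out in more detail the orthogonality facts ($T^{H}Q=0$ from ${\cal P}^{m}(X)\subseteq{\cal P}^{p}(X)$, and $C_{\perp}^{H}Q=0$ from the coarsest-level construction) that the paper's terse proof invokes in a single clause.
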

\begin{proof}
  Since $u \in (\mcp^{p}(X) \backslash \mcp^{m}(X)) \oplus W^{0}
  \oplus \dots W^{n}$, then $u = C_{\perp}s + Tw$ for some $s \in
  \mathbb{R}^{M - O}$ and $w \in R^{N-M}$, where $O =
  dim(\mcp^{m}(X))$. Replacing $u$ into (\ref{RBFI:problem}),
  pre-multiplying by $[C_{\perp}^{H} T^{H}]^{H}$ and recalling that
  $C_{\perp}, T \perp \mcp^{m}(X)$ the result follows.
\end{proof}

Once $u$ is found, $c$ is easily obtained by solving the set of
equations $L^{H}Qc = L^{H}(d - Ku)$, where $L^{H}Q \in R^{ M(p) \times
  M(p)}$.

There are two ways we can solve this, since $L$ and $Q$ span the same
space and have full column rank, then $L^{H}Q$ is invertible and
\[
c = (L^{H}Q)^{-1} L^{H} (d - Ku).
\]
\noindent Alternatively, we can define the interpolation problem
in terms of the basis vectors in $L$ directly i.e. $Q:=L$, which
leads to
\[
c = L^{H}(d - Ku).
\]
For the rest of this section we describe the algorithms for solving
the previous system of equations. The entries of the matrix
\[
K_{W} := \left( {{\begin{array}{*{20}c}
        C_{\perp}^{H}KC_{\perp} \hfill & C_{\perp}^{H}KT \hfill \\
        {T^{H} K C_{\perp} } \hfill & T^{H}KT \hfill \\
\end{array} }} \right)
\]
\noindent are formed from all the pairwise matching of any two vectors
$\psi^{i}_{k,m},\psi^{j}_{l,g}$ from the set
$\mcd:=\{\bar{D}^{-1}_{0}, \bar{D}^{0}_{0}, \bar{D}^{0}_{1}, \dots,$
$\bar{D}^{n}_{k} \} $. The entries of $K_{W}$ take the form
\begin{equation}
 \sum_{k \in \mck(n)}\sum_{k' \in \mck(n)} \sum_{{e}_{a}
 \in {\bf B}^{n}_{k}} \sum_{e_{b} \in {\bf B}^{n}_{k'}}
 K(F_{p}(e_{a}),F_{p}(e_{b}))
 \psi^{i}_{k,m}[F_{q}(e_{a})]\psi^{j}_{l,g}[F_{q}(e_{b})],
 \label{SRBF:eqn2}
\end{equation}
\noindent Notice that the summation is over all the vectors $e_o$
s.t. $o = 1,\dots, N$. However, the entries of $\psi^{i}_{k,m}$ are
mostly zeros, thus in practice the summation is over all the non-zero
terms.

Continuing with the same notation, the entries of $d_{W} := Td$ have
the form
\[
\sum_{k \in \mck(n)} \sum_{e_{a} \in {\bf B}^{n}_{k}}
\psi^{i}_{k,m}[F_{q}({e}_{a})] f(F_{p}({e}_{a})).
\]
\noindent Since $w = Pu$ and $u \perp \mcp^{p}(X)$, then
entries of $w$ have the form
 \[
 \sum_{k \in \mck(n)} \sum_{e_{a} \in {\bf B}^{n}_{k}}
 \psi^{i}_{k,m}[F_{q}(e_{a})] u[F_{q}(e_{a})],\,\,\, \forall
 \psi^{i}_{k,m} \in \mcd.
 \]
 \noindent It is clear that from the set $\mcd$ the matrix $K_{W}$
 is ordered such that the entries of any row of $K_{W}$ sums over the
 same vector $\psi^{i}_{k} \in \mcd$.
% and similarly for each column of $K_{W}$ with respect to
% $\psi^{j}_{l} \in \mcd$.
In Figure
\ref{SRBF:fig1} a block decomposition of the matrix $K_{W}$ is shown.
\begin{figure}[htbp]
  \psfrag{a}[l]{$K^{n,n}_{W}$} \psfrag{b}[c]{$\dots$}
  \psfrag{c}[c]{$K^{n,0}_{W}$} \psfrag{d}[c]{$\vdots$}
  \psfrag{e}[l]{$\ddots$} \psfrag{f}[c]{$\vdots$}
  \psfrag{g}[c]{$K^{0,n}_{W}$} \psfrag{h}[c]{$\dots$}
  \psfrag{i}[c]{$K^{0,0}_{W}$} \psfrag{j}[l]{$w^{n}$}
  \psfrag{k}[l]{$\vdots$} \psfrag{m}[l]{$w^{0}$}
  \psfrag{n}[c]{$d_{W}^{n}$} \psfrag{o}[l]{$\vdots$}
  \psfrag{p}[c]{$d_{W}^{0}$}
\begin{center}
\includegraphics[width=4.2in,height=2.10in]{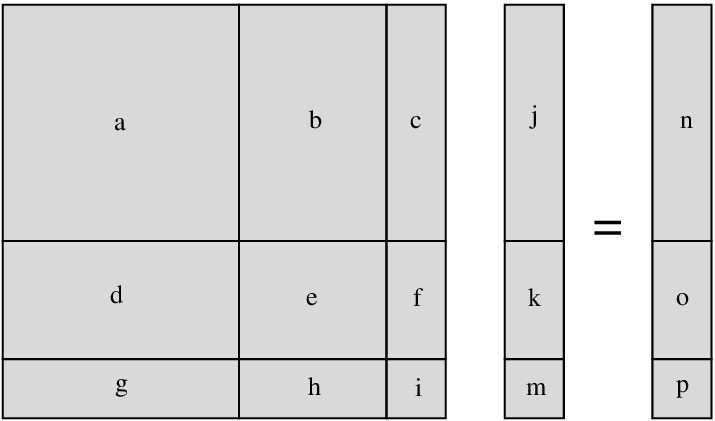}
\caption{Organization of the linear system $K_{W}w = d$. The block
  matrices $K^{ij}_{W}$ consist of all the summations in Equation
  \ref{SRBF:eqn2}, for all $\psi^{i}_{k,m}, \psi^{j}_{l,g} \in \mcd$
  that belong to level $i$ and $j$. The vectors $d^{i}_{W}$ correspond
  to all inner products of $\psi^{i}_{k,m} \in \mcd$ at level
  $i$. Similarly for $w$, where $w = Tu$.}
\end{center}
\label{SRBF:fig1}
\end{figure}
One interesting observation of the matrix $K_{W}$ is that most of the
information of the matrix is contained in a few entries. Indeed, for
integral equations it can be shown that an adapted HB discretization
matrix requires only $\mco(N log(N)^{3.5})$ entries to achieve
optimal asymptotic convergence \cite{Castrillon2003}. This has been
the approach that was followed behind the idea of wavelet
sparsification of integral equations
\cite{beylkin1991,alpert1993a,alpert1993b,Castrillon2003,amaratunga2001,petersdorff1996,petersdorff1997}.

However, it is not necessary to compute the entries of $K_{W}$ for
efficiently inverting the matrix, but instead we {\it only} have to
compute matrix vector products of the submatrices $K^{i,j}_{W}$ in
$\mco(N)$ or $\mco(N log(N))$ computational steps.
\subsection{Preconditioner}
\label{FSEAC}

One key observation of the matrix $K_{W}$ is that each of the blocks
$K^{i,j}_{W}$ is  well conditioned. Our experiments indicate that
this is the case even for non uniform placement of the nodes.
We propose to use two kinds of preconditioners on the decoupled RBF
problem: a block SSOR and a diagonal preconditioner based on the
multi-resolution matrix $K_{W}$. The block SSOR multi-resolution
preconditioner shows better iteration counts and is a novel approach
to preconditioning. However, in practice, the simplicity of the
diagonal preconditioner makes it easier to code and is faster per
iteration count for the size of problems in which we are interested.

The preconditioner on the decoupled RBF takes the form of
the following problem:
\begin{equation}
\bar{P}^{-1}K_{W}w = \bar{P}^{-1}d_{W},
\label{SSOR:eqn1}
\end{equation}
\noindent where $K_{W} \rightarrow L_{W} + D_{W} + L^{H}_{W}$
and
\[
L_{W} = \left[
\begin{array}{cccc}
  0 & 0 &  0 & 0 \\
  K^{1,0}_{W} & 0 & 0 & 0 \\
  \vdots & \ddots & 0 & 0 \\
  K^{n,0}_{W} & \dots & K^{n,n-1}_{W} & 0
\end{array}
\right]\,\,\,\, and \,\,\,\, D_{W} = \left[
\begin{array}{cccc}
  K^{0,0}_{W} & 0 &  0 & 0 \\
  0  & K^{1,1}_{W} & 0 & 0 \\
  0 & 0 & \ddots & 0 \\
  0 & 0 & 0 & K^{n,n}_{W}
\end{array}
\right].
\]
\noindent The block preconditioner is constructed as $\bar{P} = (L_{W} +
D_{W}) D_{W}^{-1} (L^{H}_{W} + D_{W})$.

We can solve this system of equations with a restarted GMRES (or
MINRES since the matrices are symmetric) iteration \cite{saad1986}. To
compute each iteration efficiently we need each of the matrix vector
products of the blocks $K^{i,j}_{W}$ to be computed with a fast
summation method. We have the choice of either computing each block as
matrix-vector products from a fast summation directly, or a sparse
preconditioner that can be built and stored.
\subsubsection{Fast Summation}
\label{subsubsection:fast}
It is not necessary to compute the matrix $K_{W}$ directly, but to
employ approximation methods to compute matrix-vector products $K_{W}
\alpha_{W}$ efficiently. To such end we make the following assumption.
%\bigskip
\begin{assumption} Let $\vec{y}_{1}, \vec{y}_{2}, \dots,
  \vec{y}_{N_{1}} \in \mathbb{R}^{3}$, $c_{1}, c_{2}, \dots c_{N_{1}}
  \in \mathbb{R}$, $R_{BF} := span($ $K(x, \vec{y}_{1}),$ $K(x,
  \vec{y}_{2}) \dots, K(x, \vec{y}_{N_1}))$, and $T =
  span\{\tilde{\phi}_{1}, \tilde{\phi}_{2}, \dots, \tilde{\phi}_{q}
  \}$, for some set of linearly independent functions
  $\tilde{\phi}_{1}, \tilde{\phi}_{2}, \dots, \tilde{\phi}_{q}.$ We
  are interested in the evaluation of the RBF map
\[
\phi(\vec x;  \vec y_{1}, \dots, \vec{y}_{N_{1}})
:= \sum_{i=1}^{N_{1}} c_{i} K(\vec{x}, \vec y_{i}),
\]
\noindent where $\vec x \in \mathbb{R}^{3}$. Suppose there exists a
transformation $F(\phi( \vec x;\vec y_{1}, \dots, \vec{y}_{N_{1}})):R_{BF} \rightarrow T$ with $\mco
(N_{1})$ computational and storage cost.  Moreover, any successive
evaluation of $F(\phi(\vec x; \vec y_{1}, \dots, \vec{y}_{N_{1}}))$ can be
 performed on the basis functions of $T$
 in $\mco(1)$
operations and
\[
|F(\phi (\cdot)) - \phi(\cdot)| \leq C_{F}A \left( \frac{1}{a}
 \right)^{\tilde{p} + 1},
\]
\noindent where $\tilde{p} \in \mathbb{Z}^{+}$ is the order of the
fast summation method, $A = \sum_{i=1}^{N_{1}} |c_{i}|$, $C_{F} > 0 $
and $a > 1$.
\label{errorestimates:assumption3}
\end{assumption}
There exist several methods that satisfy, or nearly satisfy,
Assumption \ref{errorestimates:assumption3}. In particular we refer to
those based on multi-pole expansions and the Non-equidistant Fast
Fourier Transform \cite{beatson1997,potts2004,ying2004}.

The system of equations (\ref{SSOR:eqn1}) can now be solved using an
inner and outer iteration procedure.  For the outer loop a GMRES
algorithm is used, where the search vectors are based on the matrix
$\bar{P}^{-1}$ $K_{W}$.

The inner loop consists of computing efficiently the matrix-vector
products $\bar{P}^{-1}K_{W}$ $\alpha_{W}$, for some vector $\alpha_{W}
\in \mathbb{R}^{N}$ . This computation is broken down into two steps:

\noindent {\bf Step One} To compute efficiently $K_{W}\alpha_{W}$ for
each matrix vector product $K_{W}^{i,j}\alpha^{j}_{W}$, we fix
$\psi^{i}_{k,m}$ from Equation (\ref{SRBF:eqn2}) and then transform
the map
\[
\sum_{\psi^{j}_{l,g} \in \bar{D^{j}} } \sum_{\vec{y}_{b} \in X}
K(\vec{x}_{a},\vec{y}_{b})
\psi^{j}_{l,g}[F_{q}(F^{-1}_{p}(\vec{y}_{b}))] \alpha^{j}_{l,g},
\]
\noindent for all the vectors $\psi^{j}_{l,g} \in \bar{D^{j}}$, into a
new basis $\{ \tilde{\phi}_{1}, \tilde{\phi}_{2}, \dots,
\tilde{\phi}_{q}\}$.  The computational cost for this procedure is
$\mco(N_{1})$, where $N_{1}$ corresponds to the number of non-zero
entries of all $\psi^{j}_{l,g} \in \bar{D}^{j}$.  Since the
computational cost for evaluating the new basis on any point $
\vec{x}_{a} \in X$ is $\mco(1)$, then the total cost for
calculating each row of $\bar{K}^{ i , j }_{W} \alpha^{j}_{W}$ is
$\mco( N_{1} + N_{2})$, where $N_{2}$ is equal to all the non-zero
entries of $\psi^{i}_{k,m}$.

Now, since for each $j =0, \dots, n$, $\left| \cup_{k \in \mck(j)}
  B^{j}_{k} \right| \leq N$, then $N_{1}$ is bounded by $CN$ for some $C
> 0$. For the same reason $N_{2}$ is also bounded by $CN$. This
implies that the total cost for evaluating the matrix vector products
$K_{W}\alpha_{W}$ is $\mco$ $((n+1)^{2}N)$.

% Now, there is at most $M(p)8^{j-n}$ rows in $K^{i,j}_{W}$ and from
% Remark \ref{hbconstruction:remark2}, $N_{2}$ is bounded by
% $M(p)8^{n-j}$. Furthermore since $\cup_{\psi^{j}_{l,g} \in {\cal
% D}^{i}} \psi^{i}_{k,m}$ has at most $N$ non zeros for
% $i=0,\dots,n$, then the cost for evaluating
% $K^{i,j}_{W}\alpha^{j}_{W}$ is $\mco(Nn)$ and the error is
% bounded by $C_{F} \sqrt{N} \|\alpha^{j}_{W} \|_{1} \left( \frac{1}{a}
% \right)^{\tilde{p} + 1}$. This implies the total cost for evaluating
% the matrix vector products $K_{W}\alpha_{W}$ is $\mco$
% $(n^{2}N)$.

\noindent {\bf Step Two}: The computation of $\bar{P}^{-1}\beta_{W}$,
where $ \beta_{W} := K_{W}\alpha_{W}$ is broken up into three
stages. First, let $\gamma_{W} := (L_{W}+D_{W})^{-1} \beta_{W}$,
then
\[
\left[
\begin{array}{cccc}
  K^{1,1}_{W}& 0 &  \dots &  0\\
   K^{2,1}_{W} & K^{2,2}_{W} & \ddots & \vdots  \\
  \vdots & 0 & \ddots &  0\\
  K^{n,1}_{W} & \dots & K^{n,n-1}_{W} & K^{n,n}_{W}
\end{array}
\right] \left[
\begin{array}{c}
  \gamma^{1}_{W} \\
  \gamma^{2}_{W} \\
  \vdots \\
  \gamma^{1}_{W}
\end{array}
\right] = \left[
\begin{array}{c}
  \beta^{1}_{W} \\
  \beta^{2}_{W} \\
  \vdots \\
  \beta^{n}_{W}
\end{array}
\right].
\]

%\[
%\left[
%\begin{array}{cccc}
%  K^{1,1}_{W}& K^{1,2}_{W} &  \dots &  K^{1,n}_{W}\\
%  0 & K^{2,2}_{W} & \ddots & \vdots  \\
%  0 & 0 & \ddots &  K^{n-1,n}_{W}\\
%  0 & 0 & 0 & K^{n,n}_{W}
%\end{array}
%\right] \left[
%\begin{array}{c}
%  \gamma^{1}_{W} \\
%  \gamma^{2}_{W} \\
%  \vdots \\
%  \gamma^{1}_{W}
%\end{array}
%\right] = \left[
%\begin{array}{c}
%  \beta^{1}_{W} \\
%  \beta^{2}_{W} \\
%  \vdots \\
%  \beta^{n}_{W}
%\end{array}
%\right].
%\]

\noindent Since $(L_{W}+D_{W})$ has a block triangular from, we
can solve the inverse-matrix vector product with a back substitution
scheme. Suppose that we have found $\gamma^{1}_{W}, \dots,
\gamma^{i-1}_{W}$, then it is easy to see from the triangular
structure of $(L_{W}+D_{W})$ that
\[
\gamma^{i}_{W} = (K^{i,i}_{W})^{-1}[\alpha^{i}_{W} - \sum_{k=1}^{i-1}
K^{i,k}_{W}\gamma^{k}_{W}].
\]
\noindent The cost for evaluating this matrix vector product with a
fast summation method is $\mco ((n+1)^{2} N + k (n+1) N ) $. The
last term comes from the block matrices in $D_{W}$, which are inverted
indirectly with $k$ Conjugate Gradient (CG) iterations
\cite{Hestenes1952,gene1996}. In Section \ref{results} we show
numerical evidence that $k$ converges rapidly for large numbers of
interpolating nodes.

The second matrix vector product, $\eta := D_{W}\gamma_{W}$ is
evaluated in $\mco (N)$ using a fast summation method.  Finally
the last matrix vector product $\mu := (L^{H}_{W}+D_{W})^{-1} \eta_{W}$
can be solved in $\mco ((n+1)^{2} N + k nN )$ by again using a
back substitution scheme.
\begin{remark} For many practical distributions of the interpolating
  nodes in the set $X$, the number of refinement levels $n+1$ is
  bounded by $C_{1}\log{N}$ \cite{beatson1997}. For these types of
  distributions the total cost for evaluating
  $P_{W}^{-1}K_{W}\alpha_{W}$ is $\mco(N log^{2}{N})$ assuming $k$
  is bounded.
\label{hbconstruction:remark10}
\end{remark}
This approach is best for large scale problems where memory becomes an
issue and for large vanishing moments. For small to medium size
problems the blocks $K_{W}^{i,i}$ can be computed in sparse form and
then stored for repeated use.
\subsubsection{Sparse Preconditioners} In this section we show how to
produce two types of sparse preconditioners by leveraging the ability
of HB to produce compact representations of the discrete operator
matrices.

The key idea is to produce a sparsified matrix $\tilde{P}$ of $\bar{P}$
from the entries of the blocks $K^{i,j}_{W}$. This is done by choosing
an appropriate strategy that decides which entries to keep, and which
ones not to compute.

Although it is possible to construct an accurate approximation of
$\bar{P}$ and $K_{W}$ for all the blocks $K^{i,j}_{W}$ $(i,j = 0 \dots
n)$, the computational bottleneck lies in computing the matrix vector
products with $(K^{i,i}_{W})^{-1}$. Thus it is sufficient to compute
the sparse diagonal blocks of $\tilde{K}_{W}$. The off-diagonal blocks
are computed using the fast summation method described in section
\ref{subsubsection:fast}.
\begin{definition} For every vector $\psi^{i}_{k,m} \in \mcd$ and
  the associated support box $B_{i}^{m} \in \mcb$, define the set
  $L^{i}_{m}$ to be the union of $B_{i}^{m}$ and all boxes in
  $\mcb_{j}$ that share a face, edge or corner with $B_{i}^{m}$
  i.e. the set of all adjacent boxes.
\end{definition}
To produce the sparse matrix $\tilde{P}$ we execute the following
strategy: For each entry in $K^{i,i}_{W}$ corresponding to the adapted
HB vectors $\psi^{i}_{k,m}$, $\psi^{i}_{l,g} \in \mcd$, we only
compute this entry if
\begin{equation}
  dist(L^{i}_{k},L^{i}_{l}) := \inf_{\vec x,\vec y} \|L^{i}_{k}(\vec x) -
  L^{i}_{l}(\vec y) \|_{l_{2}(\mathbb{R}^{3})} \leq \tau_{i,i},
\label{SRBF:eqn4}
\end{equation}
\noindent where $\tau_{i,i} \in R^{+}$ for $i = 0 \dots n$. For an
appropriate distance criterion $\tau_{i,i}$ we can produce a highly
sparse matrix $\tilde{K}_{W}^{i,i}$ that is close to $K_{W}^{i,i}$
(and respectively $\tilde{P}$) in a matrix 2-norm sense.
\begin{definition} The distance criterion $\tau_{i,j}$ is
set to
\begin{equation}
\tau_{i,j} :=
2^{n - i},
\label{SRBF:eqn5}
\end{equation}
\end{definition}

With this distance criterion it is now possible to compute a sparse
representation of the diagonal blocks of $K_{W}$. It is not hard to
show that for kernels that satisfy Assumption
\ref{errorestimates:assumption2} the decay of the entries of the
matrix $\tilde{K}_{W}$ is dependent on the distance between the
respective blocks and the number of vanishing moments.  If $p$ is
chosen sufficiently large (for a biharmonic $p = 3$ is sufficient),
the entries of $\tilde{K}_{W}$ decay polynomially fast, which leads to
a good approximation to $K_{W}$.

Under this sparsification strategy, it can be shown that $\| K_{W} -
\tilde{K}_{W} \|_{2}$ decays exponentially fast as a function of the
degree of vanishing moments $p$ with only $\mco(Nn^2)$ entries
in $\tilde{K}_{W}$. The accuracy results have been derived in more detail in
an upcoming paper we are writing for anisotropic spatially varying RBF
interpolation \cite{castrillon2010}.
\begin{lemma} Let ${\mathbb N}(A) :\mathbb{R}^{N \times N} \rightarrow
\mathbb{R}^{+}$, be the number of non-zero entries for the matrix $A$,
then we have
\begin{equation} {\mathbb N}(\tilde{K}^{i,i}_{W}) \leq 8M(p)7^{3}N
\label{eqBndNKWii}
\end{equation}
\label{Complexity:lemma1}
\end{lemma}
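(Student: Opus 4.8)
The plan is to bound the number of non-zero entries of $\tilde{K}^{i,i}_{W}$ by counting row by row, estimating the total as (number of rows) times (maximum number of non-zeros in a single row). The two ingredients I would assemble are already available: the sparsification rule only couples geometrically nearby boxes, and each box carries at most $8M(p)$ detail vectors.

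First I would identify the rows and columns of $\tilde{K}^{i,i}_{W}$ with the detail vectors $\psi^{i}_{k,m} \in \bar{D}^{i}$ living at level $i$. By Lemma \ref{properties:lemma10} the detail vectors of all levels, together with $\bar{C}^{-1}_{0}$ and $\bar{D}^{-1}_{0}$, form an orthonormal basis of $\mathbb{R}^{N}$, so there are exactly $N$ of them in total; in particular the detail vectors at the single level $i$ --- hence the number of rows of $\tilde{K}^{i,i}_{W}$ --- number at most $N$.

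Next I would bound the non-zeros in a fixed row $\psi^{i}_{k,m}$, which is supported on the box $B^{i}_{k}$. By the sparsification criterion (\ref{SRBF:eqn4})--(\ref{SRBF:eqn5}) a column $\psi^{i}_{l,g}$, supported on $B^{i}_{l}$, produces a non-zero entry only when $dist(L^{i}_{k},L^{i}_{l}) \leq \tau_{i,i} = 2^{n-i}$. The key geometric step is to show that at most $7^{3}$ boxes $B^{i}_{l}$ satisfy this condition. Each aggregate $L^{i}_{m}$ is the $3\times 3 \times 3$ stencil consisting of $B^{i}_{m}$ and its face-, edge-, and corner-adjacent neighbors, and $\tau_{i,i} = 2^{n-i}$ is one box-width at level $i$ measured in units of the finest-level box; consequently the proximity condition forces the center boxes $B^{i}_{k}$ and $B^{i}_{l}$ to differ by at most three boxes in each of the three coordinate directions, i.e. $B^{i}_{l}$ lies in a $7\times 7\times 7$ window around $B^{i}_{k}$. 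For each such box Remark \ref{hbconstruction:remark2} provides at most $8M(p)$ detail vectors $\psi^{i}_{l,g}$, so the fixed row has at most $7^{3}\cdot 8M(p)$ non-zero entries.

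Combining the two bounds gives ${\mathbb N}(\tilde{K}^{i,i}_{W}) \leq N \cdot 7^{3} \cdot 8M(p) = 8M(p)7^{3}N$, which is the claim. I expect the only real obstacle to be the geometric counting of the $7^{3}$ interacting boxes: it requires translating the threshold $\tau_{i,i} = 2^{n-i}$ into box-widths at level $i$ and carefully accounting for the one-box reach of the stencils $L^{i}_{k}$ and $L^{i}_{l}$, whereas the remaining factors $N$ and $8M(p)$ follow directly from the orthonormal basis count and Remark \ref{hbconstruction:remark2}.
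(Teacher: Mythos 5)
Your proposal is correct and follows essentially the same route as the paper's proof: count at most $7^{3}$ boxes $B^{i}_{l}$ whose stencils $L^{i}_{l}$ fall within the cut-off $\tau_{i,i}$ of $L^{i}_{k}$ (the paper computes this as $(2^{-i}3+2\tau_{i,i}+2^{-i+1})^{3}$ divided by the box volume), multiply by the $8M(p)$ detail vectors per cube from Remark \ref{hbconstruction:remark2}, and by the at most $N$ rows. Your explicit translation of $\tau_{i,i}=2^{n-i}$ into one box-width at level $i$ and the resulting $7\times7\times7$ window is exactly the geometric step the paper leaves to Figure \ref{SRBF:fig3}.
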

\begin{proof} First, identify the box $L^{i}_{k,m}$ that embeds
  $\psi^{i}_{k,m}$ and the distance criterion $\tau_{i,i}$ associated
  with that box.  Now, the number of vectors $\psi^{i}_{l,g}$ and
  corresponding embedding $L^{i}_{l,g}$ that intersect the boundary
  traced by $\tau$ is equal to $(2^{-i}3 + 2 \tau_{i,i} +
  2^{-i+1})^{3} / 2^{-i} \leq 2^{3(i-i)}7^3 = 7^3$ (as shown in Figure
  \ref{SRBF:fig3}).  From Remark \ref{hbconstruction:remark2} there
  are at most $8M(p)$ HB vectors per cube. The result follows.
\end{proof}
To compute the block diagonal entries of $\tilde{K}^{i,i}_{W}$, for $i
= 0, \dots, n$ in $\mco(N \log N )$ computational steps, we employ
a strategy similar to the fast summation strategy in section
\ref{subsubsection:fast}. For each row of $\tilde{K}^{i,i}_{W}$,
locate the corresponding HB $\psi^{i}_{k,m}$ from Equation
(\ref{SRBF:eqn2}) and transform the map
 \begin{equation}
   \sum_{k \in \mck(n)}\sum_{k' \in \mck(n)} \sum_{{e}_{a}
     \in {\bf B}^{n}_{k}}
   K(F_{p}(\vec{x}, e_{a}))
   \psi^{i}_{k,m}[F_{q}(e_{a})]
 \label{sparse:eqn1}
 \end{equation}
 \noindent into an approximation $G(\vec x, \psi^{i}_{k,m}) :=
 \sum_{i=1}^{q} c^{ \psi^{i}_{k,m}}_{i}\tilde{\phi}_{i}$ by applying a
 fast summation method that satisfies Assumption
 \ref{errorestimates:assumption3}. Any entry of the form $a(
 \psi^{i}_{k,m},\psi^{i}_{l,g})$ can be computed by sampling
 $G(\vec{x})$ at locations corresponding to the non-zero entries of
 $\psi^{i}_{l,g}$, and the sampled values can be used to multiply and
 sum through the non-zero values of $\psi^{i}_{l,g}$.
\begin{theorem} Each block  $\tilde{K}^{i,i}_{W}$ is
computed in at most $\mco(N)$ steps.
\end{theorem}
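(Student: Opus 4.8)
The plan is to combine the constant per-row sparsity of the block, established in Lemma \ref{Complexity:lemma1}, with the fast summation of Assumption \ref{errorestimates:assumption3}, whose defining feature is that after an ${\cal O}(N_{1})$ precomputation the transformed map can be evaluated at any point in ${\cal O}(1)$ operations. Concretely, I would process $\tilde{K}^{i,i}_{W}$ one row at a time: for the row indexed by the adapted HB vector $\psi^{i}_{k,m}$, I first assemble the RBF map of Equation (\ref{sparse:eqn1}), whose source coefficients are the non-zero entries of $\psi^{i}_{k,m}$, and feed it to the fast summation method to obtain the surrogate $G(\vec{x},\psi^{i}_{k,m})$. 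The admissible entries $a(\psi^{i}_{k,m},\psi^{i}_{l,g})$ of that row are then read off by sampling $G$ at the non-zero node locations of the column vectors $\psi^{i}_{l,g}$.

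First I would bound the setup cost. Building $G(\vec{x},\psi^{i}_{k,m})$ costs ${\cal O}(N_{1}^{k,m})$ by Assumption \ref{errorestimates:assumption3}, where $N_{1}^{k,m}$ is the number of non-zero node entries of $\psi^{i}_{k,m}$. Summing over every detail vector at level $i$ gives $\sum_{k,m}{\cal O}(N_{1}^{k,m})$. Since the non-empty cubes $B^{i}_{k}$, $k\in{\cal K}(i)$, partition the $N$ interpolation nodes, the combined number of non-zero entries of all $\psi^{i}_{l,g}\in\bar{D}^{i}$ is ${\cal O}(N)$; hence the aggregate setup cost over the whole block is ${\cal O}(N)$.

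Next I would bound the extraction cost. By the sparsification criterion (\ref{SRBF:eqn4})--(\ref{SRBF:eqn5}) and Lemma \ref{Complexity:lemma1}, the row of $\psi^{i}_{k,m}$ carries at most $8M(p)7^{3}$ non-zero entries, and symmetrically each column $\psi^{i}_{l,g}$ occurs in at most $8M(p)7^{3}$ rows. Each admissible entry $a(\psi^{i}_{k,m},\psi^{i}_{l,g})$ is computed by sampling $G(\cdot,\psi^{i}_{k,m})$ at the $N_{2}^{l,g}$ non-zero node locations of $\psi^{i}_{l,g}$---each sample costing ${\cal O}(1)$---and accumulating the weighted sum. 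Reordering the double sum over admissible pairs and using the bounded column multiplicity, the total sampling work is at most $8M(p)7^{3}\sum_{l,g}N_{2}^{l,g}$, which is again ${\cal O}(N)$ because, as above, the supports of the level-$i$ detail vectors sum to ${\cal O}(N)$. Adding the two contributions yields the claimed ${\cal O}(N)$ cost for the block.

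The main obstacle is the extraction bookkeeping: a crude count that multiplies the ${\cal O}(N)$ non-zero matrix entries by the per-vector support size $N_{2}^{l,g}$---which can grow at the coarse levels---looks super-linear. The resolution is to exploit both the constant bandwidth of Lemma \ref{Complexity:lemma1} and the ${\cal O}(1)$-per-evaluation property of the fast summation, so that each detail vector's support is revisited only a bounded number of times; swapping the order of summation then collapses the estimate to a single pass over the ${\cal O}(N)$ total support. Finally I would observe that this argument is uniform in $i$, so it holds for every diagonal block and, summed over the $n+1$ levels, reproduces the ${\cal O}(N(n+1))$ bound announced for assembling all diagonal blocks.
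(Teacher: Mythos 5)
Your proof follows essentially the same route as the paper's: an ${\cal O}(N)$ aggregate setup cost for the surrogates $G(\vec{x},\psi^{i}_{k,m})$ because the level-$i$ supports partition the $N$ nodes, combined with the constant per-row entry count from Lemma \ref{Complexity:lemma1} and the ${\cal O}(1)$-per-evaluation property of the fast summation. In fact your accounting of the extraction step is slightly more careful than the paper's --- the paper asserts ${\cal O}(1)$ evaluations per row without addressing that each entry requires sampling $G$ over the full support of the column vector, whereas your reordering of the sum over admissible pairs, using the symmetric bound on column multiplicity, makes that step airtight.
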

\begin{proof}
  The cost for computing the basis of $G(\vec{x})$ corresponding to
  $\psi^{i}_{k,m}$ is at most $\mco(N^{i}_{k,m})$, where
  $N^{i}_{k,m}$ is the number of non zeros of $\psi^{i}_{k,m}$. Now,
  since $\left| \cup_{k \in \mck(i)} B^{i}_{k} \right| \leq N$ the
  cost of computing $G(\vec{x},\psi^{i}_{k,m})$ for all the vectors
  $\psi^{i}_{k,m}$ at level $i$ is $ \sum_{\psi^{i}_{k,m} \in
    D^{i}_{k}, k \in \mck(i)}$ $N^{i}_{k,m} $ $= \mco(N)$.

  For each row in $\tilde{K}^{i,i}_{W}$, from Lemma
  \ref{Complexity:lemma1} there is at most $8M(p)7^{3}$ entries. This
  implies that for each vector $\psi^{i}_{k,m}$ we need only $\mco(1)$
  evaluations of $G(\vec{x},\psi^{i}_{k,m})$ to compute a row of
  $\tilde{K}^{i,i}_{W}$. Now, if we sum up the cost of evaluating
  $G(\vec{x},\psi^{i}_{k,m})$ for all the rows then the total cost for
  evaluating $\tilde{K}^{i,i}_{W}$ is $\mco(N)$.
\end{proof}
\begin{remark} For each entry in $\tilde{K}^{i,i}_{W}$, the
  corresponding basis vectors $\psi^{i}_{k,m},\psi^{j}_{l,g}$ can be
  found in $\mco(n)$ computational steps. This is easily achieved
  by sorting the set of cubes $\{ B^{j}_{l} \}_{ l \in \mck,
    j=1,\dots,n}$ with an octree structure, i.e. a parent-child
  sorting.
\end{remark}
\begin{remark} Note that further improvements in computation can be
  done by observing that $\psi^{i}_{k,m}$ is a linear combination of
  the vectors $\phi^{i-1}_{l,o} \in V^{i-1}$. Thus equation
  (\ref{sparse:eqn1}) can be written as a linear combination of
 \begin{equation}
   \sum_{k \in \mck(n)}\sum_{k' \in \mck(n)} \sum_{{e}_{a}
     \in {\bf B}^{n}_{k}}
   K(\vec{x},  F_{p}(e_{a}))
   \phi^{i-1}_{l,o}[F_{q}(e_{a})].
 \label{sparse:eqn2}
\end{equation}
\noindent If two vectors $\psi^{i}_{k,m}$ and $\psi^{i}_{k,m'}$ are in
the same cube then it is sufficient to compute equation
(\ref{sparse:eqn1}) once and apply the coefficients computed in the
construction of the entire HB. In addition, if two vectors
$\psi^{i}_{k,m}$ and $\psi^{i}_{k',m'}$ share the same vector
$\phi^{i}_{l,o} \in V^{i-1}$, the same procedure can be applied.  In
our results in Section \ref{results} we apply this scheme to compute
the SSOR and diagonal blocks.
\label{MultiRBF:remark13}
\end{remark}
\begin{figure}
  \psfrag{a}{$L^{i}_{k}$} \psfrag{b}{$L^{j}_{l}$}
  \psfrag{c}{$\psi^{i}_{k,m}$} \psfrag{d}{$\psi^{j}_{l,g}$}
  \psfrag{e}{$\tau_{i,j}$} \psfrag{f}{$vh^{j}$} \psfrag{g}{$h^{i}$}
  \psfrag{h}{$h^{j}$}
\begin{center}
\includegraphics[scale = 1.2]{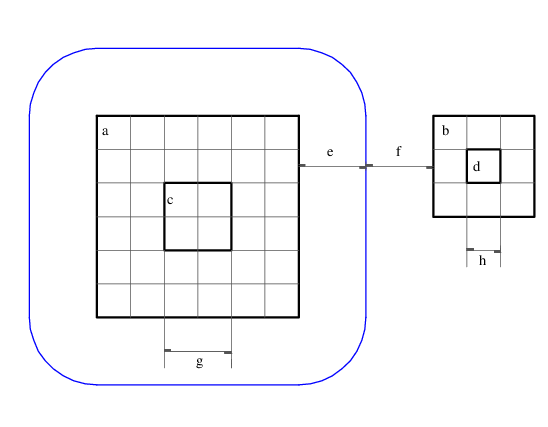}
\end{center}
\caption{Distance criterion cut-off boundary for the cube $L^{i}_{k}$,
corresponding to all the vectors $\psi^{i}_{k,m} \in {\bar
D}^{i}_{k}$.  Assume $j \geq i$ and $h = 2^{-1}$, and each cube
$B^{i}_{k}$ is evenly divided by $B^{j}_{l}$. With this in mind,
the cut-off criterion traces a cube of length $2\tau_{i,j}$ plus the
side length of $L^{j}_{l}$. For any vector $\psi^{j}_{l,g}$ such that
$L^{j}_{l}$ crosses the cut-off boundary, we compute the corresponding
entries in the matrix $\tilde{K}^{i,i}_{W}$.
% As $v$ increases, by the
% error estimates (\ref{eqBndRalpha}), the entries $a(
% \psi^{i}_{k,m},\psi^{j}_{l,g})$ decay as a polynomial of $p$ with respect to
% the distance $vh^{j}$.
}
\label{SRBF:fig3}
\end{figure}
% \begin{figure}[htbp]
% \begin{center}
% \includegraphics[width=3.00in,height=2.50in]{fig7.eps}
% \caption{Example of sparsity pattern for $\tilde{K}_{W}$ created by applying the distance
% criterion to each of the blocks of $K^{i,j}_{W}$.}
% \end{center}
% \label{SRBF:fig2}
% \end{figure}
\begin{remark}

  As our results show a very simple, but effective, diagonal
  preconditioner can be built from the blocks of $K^{i,i}_{W}$.  In
  particular
\[
P:= diag \left( \left[
\begin{array}{cccc}
K^{1,1}_{W}& 0 &  \dots &  0\\
0 & K^{2,2}_{W} & \ddots & \vdots  \\
0 & 0 & \ddots &  0\\
0 & 0 & 0 & K^{n,n}_{W}
\end{array}
\right] \right).
\]
This preconditioner is also much easier to construct in practice.
\end{remark}
\begin{figure}[htpb]
\begin{center}
\vspace{-3mm}
\psfrag{a}[c]{{\bf Test Case \#1}}
\psfrag{b}[c]{Top view}
\psfrag{c}[c]{Side view}
\psfrag{d}[c]{Side view}
\psfrag{x}[c]{x}
\psfrag{y}[c]{y}
\psfrag{z}[c]{z}
\includegraphics[width=5in, height=3.5in]{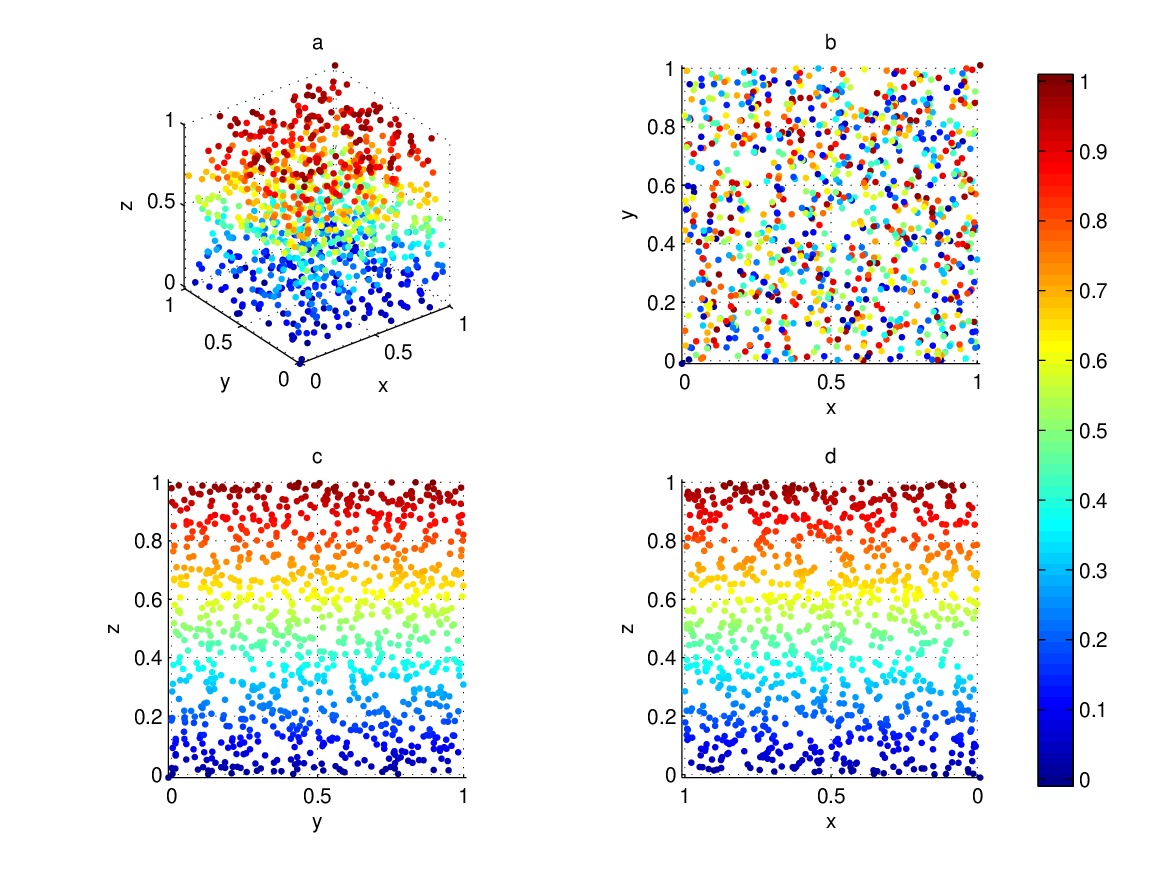}
(a) \\
\psfrag{e}[c]{{\bf Test Case \#2}}
\psfrag{f}[c]{Side View}
\includegraphics[width=4in, height=3.5in]{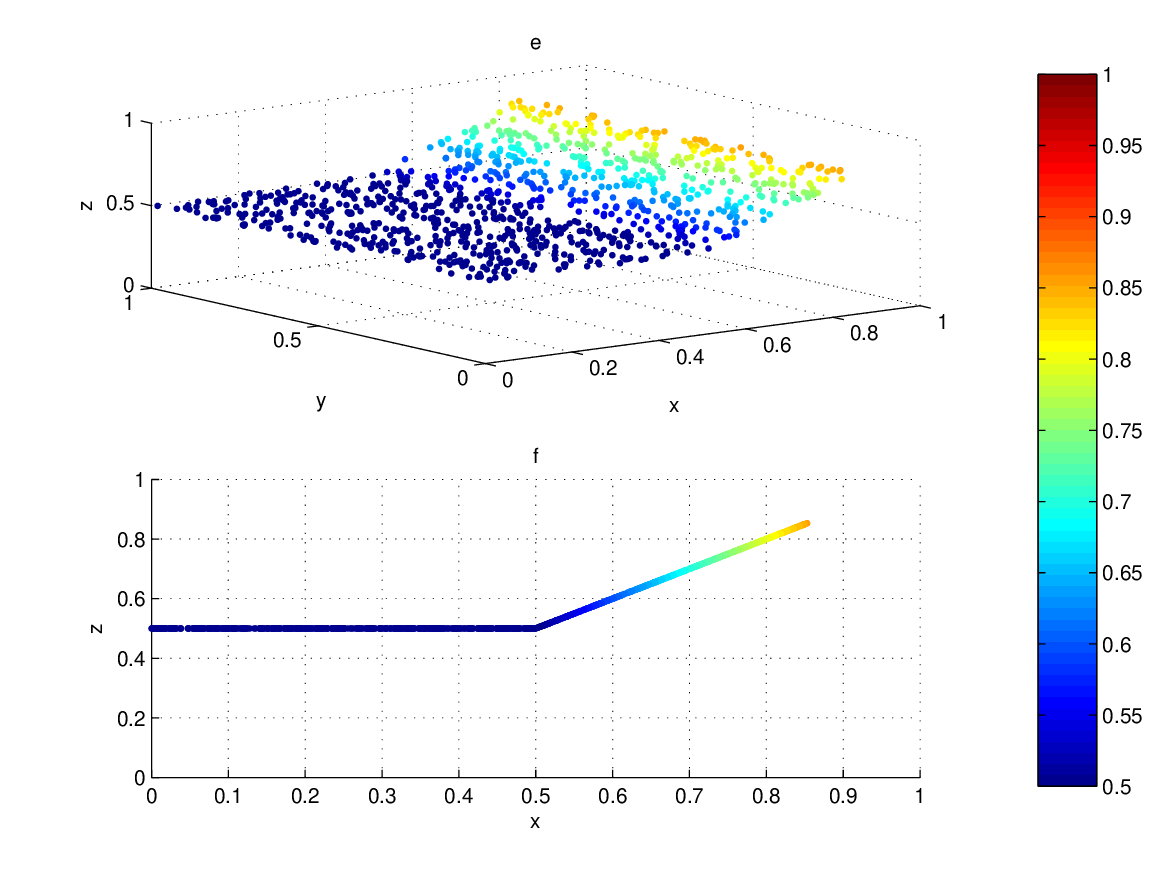}\\
(b)
\end{center}
\vspace{-3mm}
\caption{ {\bf (a) Test Case \#1 Cube RBF interpolating set:}
  Interpolating set with a thousand nodes with
  orthographic views.  The colorbar indicates the height (z-axis) of
  the interpolating nodes. {\bf (b) Test Case \#2 V-plane RBF interpolating
    set},  with one thousand nodes.}
\label{SRBF:fig9}
\end{figure}

\section{Numerical Results}
\label{results} In this section we apply the multi-resolution
method developed in section \ref{RBF:Mutli} to RBF interpolation
problems.  These will be of different sizes and polynomial orders
for the biharmonic, multiquadric and inverse multiquadric function
kernel in $\mathbb{R}^{3}$. These kernels can be written in a
common form $K (r):= (r^2 + \delta^2)^{l/2}$, where $r:=|x|$,
$\delta \in \mathbb{R}$ and $l \in \mathbb{Z}$. The distribution
of the nodes in $X$ are separated into two cases.

{\it Test Case 1:} We test our method on several sets of randomly
generated interpolating nodes in the unit cube in $\mathbb{R}^{3}$
as shown in Figure \ref{SRBF:fig9}. The sets of interpolating
nodes $\{ X_{1}, \dots, X_{e} \} $ vary from 1000 to 512,000
nodes.  Each set of interpolating nodes is a subset of any other
set with bigger cardinality, i.e., $X_{l} \subset X_{l+1}$.  The
function values on each node are also grouped into $e$ sets $\{
b_{1}, \dots, b_{e}\} $ with randomly chosen values and satisfy
also $b_{l} \subset b_{l+1}$.

{\it Test Case 2:} For this second test we apply a projection of the
data nodes generated in {\it Test Case 1} onto two non-orthogonal
planes $\mathbb{R}^{3}$, then remove any $two$ nodes that are less
than $10^{-4}$ distance from each other. The V-plane intersecting are
shown in Figure \ref{SRBF:fig9}. Due to the sharp edges, this test
case is significantly harder than Test Case 1 and the test examples in
\cite{Gumerov2007}. Note, that only about $0.1\%$ of the centers were
eliminated and the number of nodes in the table is approximate.
% (The actual numbers are 1000, 2000, 3999,
% 7997, 15991, 31976, 63937 and 99834).

{\it Test Setup:} The implementation of the multi-resolution discrete
HB method is performed in C++ and compiled with the Intel CC compiler.
The GMRES algorithm is incorporated
from PETSc (Portable, Extensible Toolkit for Scientific computation)
libraries \cite{www:petsc} into our C++ code.  Inner and outer
iterations are solved using a GMRES algorithm with 100-iteration
restart. In the rest of this section when we refer to GMRES
iterations, we imply restarted GMRES with a restart for every 100
iterations.  Since the preconditioned system will introduce errors in
the RBF residual of the original Problem \ref{RBFI:problem}, the
accuracy of the GMRES is adjusted such that the residual $\epsilon$ of
the unpreconditioned RBF system is less than $10^{-3}$. In Tables
\ref{SRBF:table2} and \ref{SRBF:table5} the GMRES accuracy residual
are reported.

All the numerical tests with a fast summation method are performed
with a single processor version of the Kernel-Independent Fast
Multipole Method (KIFMM) 3D code (http://mrl.nyu.edu/ $\sim$harper/kifmm3d/documentation/index.html).
 This code implements the
algorithm described in \cite{ying2004}.  The accuracy is set to
relative medium accuracy ($10^{-6}$ to $10^{-8}$). In addition, all
numerical timings presented in this paper are wall clock times.

The C++ code was also compiled  for a single core on the Dell Precision
T7500 workstation with Linux Ubuntu 11.04, 12 core Xeon X5650 at 2.67
GHZ with 12 MB Cache.  All results (except for the Condition number test)
where performed sequentially on
a single core of the same processor. Similar results where also
observed with a single thread of Core i7 1.66 GHZ processor and on a
single core of an Intel(R) Core(TM)2 Quad CPU Q9450 @2.66GHZ processor
(12 MB L2 Cache).  At some point we will make available the code for
the public with instructions for compilation.

{\bf Test Examples:}

{{\it Condition number $\kappa$ of underlying system of equations with
    respect to scaling all the domain.}  One immediate advantage our
  method has over a direct method is the invariance of the
  conditioning of the system of equations with respect to the scale of
  the polynomial domain. This is a consequence of the construction of
  the HB polynomial orthogonal basis.

Removing the polynomial source of ill-conditioning makes the system
easier to solve. The condition number of the full RBF interpolation
matrix is sensitive to the scaling of the domain. We show this by
scaling the domain by a constant $\alpha \in \mathbb{R}$.

As shown in Table \ref{SRBF:table1} the condition number for the 1000
center problem with $m=3$ and $K(r)=r$ deteriorates quite rapidly
with scale $\alpha$. In particular, for a scaling of 1000 or larger (
0.01 or smaller) an iterative method, such as GMRES or CG, stagnates.
We note that the invariance of the condition number of the decoupled
system was also observed in \cite{beatson2000,sibson1991}.

Another important observation is that the same result will apply for a
multiquadric, or inverse multiquadric of the form $K(r) = (r^2 +
\delta^2)^{\pm l}, \delta \in \mathbb{R}$, due to the polynomial
decoupling from the RBF matrix. In general, this will be true for any
strictly conditionally positive (or negative) definite RBF.  However,
the matrix $K_{W}$ will still be subject to the underlying condition
number of $K$. In other words, if $\kappa(K)$ deteriorates
significantly with scale then $K_{W}$ will also be ill-conditioned.}

\begin {table}[htbp]
\begin{center}
  \begin {tabular} {| c| c | c | c | c | c |}
    \hline Scale & $\alpha = 0.01 $ & $\alpha = 0.1$ & $\alpha = 1$ &
    $\alpha = 100$ & $\alpha = 1000$ \\ \hline \hline $\kappa$ of
    RBF system & $ 4.5\times10^{24}$ & $5.7\times10^{13}$ & $5.2\times10^{6}$
    & $7.9\times10^{11}$ & $5.3\times10^{17}$ \\ \hline
    $\kappa( K_{W} )$ & 762 & 762 &  762 &  762 & 762 \\
    \hline
\end {tabular}
\\
\end{center}
\caption{Condition number for RBF system matrix (equation
  \ref{RBFI:problem}) versus scale of the problem for a thousand nodes
  for Test Case 1 with respect to the biharmonic $K(r) := r$. As
  observed, increasing the scale by alpha the condition number
  deteriorates very rapidly.  In particular, for a condition number
  higher than the reciprocal of machine position and the GMRES or CG
  algorithm stagnates.}
\label {SRBF:table1}
\end {table}

{\it Biharmonic RBF, m = 3 (cubic) and p = 3} This is an example
of a higher order polynomial RBF interpolation. We test both the
SSOR and diagonal preconditioner on Test Case 1 \& 2.  For the
preconditioner the accuracy of the GMRES outer iterations is set
such that the residual $\epsilon:=\| K_{W}w - d_{W} \| \leq
10^{-3}$. % For the preconditioner the number of iterations to invert
  % blocks $K^{i,i}_{W}$, $i=0, \dots, n$ with an accuracy of $10^{-6}$
  % are shown in Figure \ref{SRBF:fig5}.
Due to the condition number of the blocks they quickly converge with
either a CG, or a GMRES solver. Moreover, the number of iterations
appear to grow slowly with size.

In Table \ref{SRBF:table2} (a) the iteration and timing results for
the sparse SSOR preconditioner for Test Case 1 \& 2 are shown.  For
Test Case 1, the number of restarted GMRES iterations grows as
$\mco(N^{0.55})$. Fitting a linear regression function to the log-log
plot leads to a growth of $\mco(N^{1.85})$ for time complexity. Test
Case 2 (v-plane) is a harder problem due to the corner and the
projection of the random data from Test Case 1 onto two planes at 135
degrees to each other. The total GMRES iteration grows as
$\mco(N^{0.54})$ at time complexity $\mco(N^{1.85})$.

In Table \ref{SRBF:table2} (b) the iteration and timing results for
the diagonal preconditioner with Test Case 1 \& 2 are shown.  We can
observe that although the GMRES iteration count is higher than that of
the SSOR preconditioner ($CN^{0.51}$), the simplicity of the
preconditioner allows every matrix-vector product to be computed much
faster.  Fitting a line to the log data leads to a total time
complexity increases of $\mco(N^{1.6})$. The memory constraints are
also much lower than the SSOR since only $N$ entries are needed to be
stored for the preconditioner. For Test Case 2 (v-plane), the increase
of time complexity ($\mco(N^{1.7})$ and the GMRES iteration count
$\mco(N^{0.73})$ reflects that it is a harder problem than Test Case
1. Another observation is that the time required to compute the
diagonal preconditioner is about one half compared to Test Case
1. This is due to the adaptive way we compute the diagonal, recall
Remark \ref{MultiRBF:remark13}.

\begin {table}[htbp]

\begin{center}
  \begin {tabular} {| c || c | r | r | r || c | r | r | r |}
  \hline
  &  \multicolumn{4}{|c||}{Test Case 1}  &
   \multicolumn{4}{|c|}{Test Case 2}  \\
  \cline{2-9}
  N &  GMRES & \,\,$\tilde{K}^{i,i}_{W}(s)$\,\, &
  Itr (s) & Total (s)  & GMRES & \,\,$\tilde{K}^{i,i}_{W}(s)$\,\, &
  Itr (s) & Total (s)  \\
  \hline
    1000   & 10  &   1  & 8     &    9  &   22 &    1 &    16  &     17 \\
    2000   & 15  &   2  & 13    &   15  &   29 &    2 &    41  &     43 \\
    4000   & 21  &   6  & 39    &   45  &   43 &   5  &   178  &    183 \\
    8000   & 29  &  61  & 275   &   336 &   66 &   44 &   623  &    667 \\
    16000  & 48  & 194  & 798   &  993  &   91 &  118 &  2298  &   2416 \\
    32000  & 71  & 815  & 3907  &  4722 &  118 &  612 &  12288 &  12900 \\
    64000  & 99  & 1754 & 13841 & 15595 &  195 &  951 &  28500 &  29451 \\
    128000 & 134 & 5547 & 29165 & 34712 &  305 & 2572 &  98505 & 101078 \\
    \hline
\end {tabular}
\\
\vspace{1mm} (a) SSOR Preconditioner Test Case 1 \& 2\vspace{3mm}

\begin {tabular} {| c || c | r | r| r || c |r |r |r|}
\hline
  &  \multicolumn{4}{|c||}{Test Case 1}  &
   \multicolumn{4}{|c|}{Test Case 2}  \\
  \cline{2-9}
  N & GMRES & Diag. (s)  & Itr (s) & Total (s) & GMRES & Diag. (s)  & Itr (s) & Total (s) \\
  \hline
  1000   & 33  & 1  &      5  &     6  & 90      &  1   & 10   &    11   \\
  2000   & 45  & 2  &       6 &      8 & 102     & 2   & 11 &    13   \\
  4000   & 66  & 6  &      16 &    22  & 147     & 5   & 30 &   34  \\
  8000   & 87  & 62 &      56 &    117    & 269     & 44 &  121   &   165 \\
  16000  & 128 & 195   &        148 &    344   & 355      & 118   &  286  &   404 \\
  32000  & 184 & 813   &        749 &   1563    & 876     & 569   &  2924 &  3493 \\
  64000  & 281 & 1752  &       1817 &   3569   & 1242    &  951    & 4135 &  5087 \\
  128000 & 385 & 5555  &       3949 &  9505    & 3033   &   2573   & 21917& 24491 \\
  256000 & 573 & 14350 &      12130 &  26480   &- &- &- &- \\
  512000 & 769 & 47082 &      44309 & 91391     &- &- & -&- \\
  \hline
\end {tabular}

\vspace{1mm} (b) Diagonal Preconditioner Test Case 1 \& 2
\vspace{3mm}

\end{center}
\caption{Wall clock times results for biharmonic $K(r)=r$, $m = 3$
  (Cubic), $p = 3$.  (a) Iteration and timing results for the sparse
  SSOR preconditioner for Test Case 1 (uniform cube) \& 2 (v-plane).
  The first column is the number of interpolating points. The second
  column is the number of iterations such that $\epsilon$, the
  residual error for the unpreconditioned system, is less than
  $10^{-3}$.  The third column is the time (in seconds) to compute the
  sparse inner blocks $K^{i,i}_{W}$ and the fourth is the time for
  GMRES iterations.  The fifth column is the total time (in seconds)
  for solving the RBF problem.  The remaining columns are for Test
  Case 2 and follow the same order as results for Test Case 1.  (b)
  Iteration and timing results for diagonal preconditioner for Test
  case 1 \& 2. The columns are in the same order as before, except
  that the third column and seventh columns are the time involved in
  computing the diagonal preconditioner. }
\label{SRBF:table2}
\end {table}

\begin {table}[htbp]
\begin{center}
\begin {tabular} {| c||c | r | r | r || c | c | r|r | r |}
\hline
  &  \multicolumn{4}{|c||}{ (a) Test Case 1, Multiquadric}  &
   \multicolumn{4}{|c|}{ (b) Test Case 1, Inverse Multiquadric}  \\
  \cline{2-9}
  N &  GMRES & Diag.(s) &Itr (s) & Total (s)  &  GMRES & Diag.(s) & Itr (s) & Total (s)  \\
  \hline
  1000   & 38    &  1  &   1     &     2 &      7   &   1   &    1   &  1   \\
  2000   & 55    &  2  &      5  &      7 &   8       & 3 &     1  & 4   \\
  4000   & 86    &  6  &     13 &    18 &   14      &   8   &      4 &  11  \\
  8000   & 128   &  32 &     41&   73  &    17      &  45   &   9   & 54  \\
  16000  & 195   &  99    &      155&      254  & 27       &    138   &    28 &  166 \\
  32000  &  362  & 233    &     486&      720  &   63      &  343     &   119   & 462  \\
  64000  & 684   &  757   &   2217&    2975  &      84     &  1131    &       414     &  1546\\
  128000 & 1059  & 2357   &  7637&   9994  &       112     &  3494    &      985     &  4480\\
  \hline
\end{tabular}
\end{center}
\caption{Iteration and timing results for diagonal preconditioner,
  multiquadric $K(r):=(r^2 + 0.01^2)^{ \pm \frac{1}{2}}$, and test
  case 1 (uniform cube), $m = 3$, $p = 3$  for (a) Multiquadric (+1/2)
  and (b) Inverse multiquadric (-1/2).}
\label {SRBF:table5}
\end{table}

{\it Multiquadric and inverse multiquadric RBF,
  $m = 3$ (cubic), $p = 3$.} For the case
of the multiquadrics with $\delta = 0.01$, the iteration count increases
significantly, as shown in Table \ref{SRBF:table5}(a). The number of
GMRES iterations increases as $CN^{0.7}$.  This is a harder problem
to solve due to the ill-conditioning introduced by the constant term
$\delta$, as reflected by the increase in the number of GMRES
iterations. Fitting a line through the log-log plot of the total time
leads to a $CN^{1.8}$ time complexity.

In contrast, the inverse multiquadrics result shown in Table
\ref{SRBF:table5}(b) is a better conditioned problem leading to around
the same complexity as for the biharmonic case, but the constant is
lower.  We note that to achieve comparable interpolation accuracy, the
value of $\delta$ for the inverse multiquadric generally needs to be larger
than for the multiquadric case. And the larger the $\delta$ the more
ill-conditioned the RBF interpolation problem.

\section{Conclusions}
\label{conclusions}

In this paper we construct a class of discrete HB that are adapted
both to the RBF kernel function and the location of the
interpolating nodes. The adapted basis has two main advantages:
First the RBF problem is decoupled, thus solving the scale
dependence between the polynomial and RBF interpolation. Second
with a block SSOR scheme, or a simple diagonal matrix built from
the multi-resolution matrix $K_{W}$, an effective preconditioner
is built that reduces significantly the iteration count. Our
result shows a promising approach for many RBF interpolation
problems.

Further areas of interest as future work:

\begin{itemize}

\item {\it Sparsification of $K_{W}$ matrix.} Due to orthogonality
  properties of the discrete HB a sparse representation
  $\tilde{K}_{W}$ of $K_{W}$ can be constructed where $\| K_{W} -
  \tilde{K}_{W} \|$ is small. The sparse representation is used at
  each iteration in lieu of the dense matrix, thus opening the
  possibility of significantly increasing the time efficiency of each
  matrix vector product.

\item {\it High Dimensional RBF Problems.}  In principle the method
  that we have developed can be extended to high dimensional RBF
  problems.

\item{ {\it Spatially varying anisotropic kernels}.  An interesting
    observation is that the adapted discrete HB leads to a sparse
    multi-resolution RBF matrix representation for spatially varying
    kernels. This type of RBF interpolation has been gaining some
    interest lately due to the ability to better steer each local RBF
    function to increase accuracy.  Due to the spatially varying
    kernel, we cannot use a fast summation method to optimally compute
    each matrix vector product. However, preliminary results show that
    we can sparsify the RBF matrix while retaining high accuracy of
    the solution. Full error bounds and numerical results will be
    described in a following paper that we are currently writing.}

\end{itemize}

% During the writing of this paper we became aware of the H-Matrix
% approach by Hackbusch \cite{Borm2003} applied to stochastic
% capacitance extraction \cite{Zhu2005}. Indeed, the H-Matrix approach
% applied to Gaussian Regression in \cite{Borm2007} has many
% similarities with RBF interpolation.

% There are many similarities between the H-Matrix and Hierarchical
% Basis approaches. The H-Matrix factorizes the operator matrix in a
% hierarchy of rectangular boxes and approximate each block with a low
% rank matrix. HB builds a multi-resolution basis with polynomial
% vanishing moments and sparsifies the matrix operator by applying a
% change of basis.  Both approaches lead to sparse matrix
% representations and can be used for pre-conditioning
% \cite{Oliveira,Oliveira2007,Borne2006}.

% One observation is that the H-matrix approach would still have the
% coupling between the polynomial and RBF components, which could still
% lead to a poorly conditioned system. One way around this problem is to
% combine the H-Matrix and HB approaches by decoupling the system with
% HBs and then applying the H-matrix to compute a sparse form of
% $K^{-1}_{W}$.

\section*{Acknowledgments}
We are grateful to Lexing Ying for providing a single processor version of the
KIFMM3d code.  We also appreciate the discussions, assistance and
feedback from Raul Tempone, Robert Van De Gein, Vinay Siddavanahalli
and the members of the Computational Visualization Center (Institute
for Computational Engineering and Sciences) at the University of
Texas at Austin. In addition, we appreciate the invaluable feedback
from the reviewers of this paper.

% BibTeX users please use one of
%\bibliographystyle{spbasic}      % basic style, author-year citations
%\bibliographystyle{spmpsci}      % mathematics and physical sciences
%\bibliographystyle{spphys}       % APS-like style for physics
%\bibliography{}   % name your BibTeX data base

\bibliographystyle{amsplain}
% \bibliography{fastSummationPapers,citations,RBF}

\newcommand{\noopsort}[1]{}
\providecommand{\bysame}{\leavevmode\hbox to3em{\hrulefill}\thinspace}
\providecommand{\MR}{\relax\ifhmode\unskip\space\fi MR }
% \MRhref is called by the amsart/book/proc definition of \MR.
\providecommand{\MRhref}[2]{%
  \href{http://www.ams.org/mathscinet-getitem?mr=#1}{#2}
}
\providecommand{\href}[2]{#2}

% \begin{thebibliography}{}
% \end{thebibliography}

\end{document}